\newcommand{\N}{{\mathbb{N}}}
\newcommand{\Z}{{\mathbb{Z}}}
\newcommand{\C}{{\mathbb{C}}}
\newcommand{\D}{{\mathcal{D}}}
\newcommand{\uloopr}[1]{\ar@'{@+{[0,0]+(-4,5)}@+{[0,0]+(0,10)}@+{[0,0] +(4,5)}}^{#1}}
\newcommand{\uloopd}[1]{\ar@'{@+{[0,0]+(5,4)}@+{[0,0]+(10,0)}@+{[0,0]+ (5,-4)}}^{#1}}
\newcommand{\dloopr}[1]{\ar@'{@+{[0,0]+(-4,-5)}@+{[0,0]+(0,-10)}@+{[0, 0]+(4,-5)}}_{#1}}
\newcommand{\dloopd}[1]{\ar@'{@+{[0,0]+(-5,4)}@+{[0,0]+(-10,0)}@+{[0,0 ]+(-5,-4)}}_{#1}}
\newcommand{\luloop}[1]{\ar@'{@+{[0,0]+(-8,2)}@+{[0,0]+(-10,10)}@+{[0, 0]+(2,2)}}^{#1}}
\newtheorem{lem}{Lemma}[section]
\newtheorem{corol}[lem]{Corollary}
\newtheorem{theor}[lem]{Theorem}
\newtheorem{prop}[lem]{Proposition}
\theoremstyle{definition}
\newtheorem{defi}[lem]{Definition}
\newtheorem{exem}[lem]{Example}
\newtheorem{rema}[lem]{Remark}
\begin{document}
\title[Purely infinite crossed products by endomorphisms]{Purely infinite crossed products by endomorphisms}%
\author{Eduard Ortega}
\address{Department of Mathematical Sciences\\
NTNU\\
NO-7491 Trondheim\\
Norway } \email{eduard.ortega.esparza@gmail.com}

\author{Enrique Pardo}
\address{Departamento de Matem\'aticas, Facultad de Ciencias\\ Universidad de C\'adiz, Campus de
Puerto Real\\ 11510 Puerto Real (C\'adiz)\\ Spain.}
\email{enrique.pardo@uca.es}\urladdr{https://sites.google.com/a/gm.uca.es/enrique-pardo-s-home-page/}\urladdr{http://www.uca.es/dpto/C101/pags-personales/enrique.pardo}

\thanks{This research was supported by the NordForsk Research Network
  ``Operator Algebras and Dynamics'' (grant \#11580). The second author was partially supported by PAI III grants FQM-298 and P07-FQM-7156 of the Junta de Andaluc\'{\i}a. Both authors were partially supported by the DGI-MICINN and European Regional Development Fund, jointly, through Project MTM2011-28992-C02-02 and by 2009 SGR 1389 grant of the Comissionat per Universitats i Recerca de la Generalitat de
Catalunya.} \subjclass[2010]{Primary 46L35; Secondary
06A12, 06F05, 46L80} \keywords{Crossed product, dilation, shift endomorphism, Rokhlin property, purely infinite}
\dedicatory{To Ingvar Ortega Redalen}
\begin{abstract}
We study the crossed product $C^*$-algebra associated to injective endomorphisms, which turns out to be equivalent to study the crossed product by the dilated autormorphism. We prove that the dilation of the  Bernoulli $p$-shift endomorphism is topologically free. As a consequence, we have a way to twist any endomorphism of a $\D$-absorbing $C^*$-algebra into one whose dilated automorphism is essentially free and have the same $K$-theory map than the original one. This allows us to construct purely infinite crossed products $C^*$-algebras with diverse ideal structures.    
\end{abstract}

\maketitle

\section*{Introduction}

The study of group actions on $C^*$-algebras has been largely developed by several authors during the last years. To this end, a key strategy is to characterize properties of the crossed product $C^*$-algebra by looking at the dynamical properties of the action. This is very intuitive in the commutative case, but becomes more subtle when the $C^*$-algebra is non-commutative. An intensive work on that area was done by Olesen and Pedersen in \cite{OP3}, where they explored a certain subset of the dual group $\Gamma$ of $G$, called the \emph{Connes' Spectrum}, which measured the obstruction of the automorphism to be inner.  In this way, they were able to characterize when certain crossed products $C^*$-algebra were simple. An associated problem is to examine conditions on the group action so that the ideals in the 
crossed product are separated by the base algebra; this allows us to have control on the ideal structure of the crossed product. Renault \cite{Re} stated implicitly that \emph{essential freeness} of a group $G$ acting on a $C^*$-algebra  $A$ might be enough to guarantee that $A$ separates the ideals of the reduced crossed product $A\rtimes_rG$. Sierakowski \cite{Sie} studied this problem and presented another way of ensuring this separation property on $A\rtimes_r G$. For, he defined a generalized version of the Rokhlin property, which he called the residual Rokhlin* property. Hence, he showed that $A$ separates the ideals in $A\rtimes_rG$ provided that the action of $G$ on $A$ is exact and satisfy the residual Rokhlin* property.

In \cite{Cu}, Cuntz defined the fundamental Cuntz algebras $\mathcal{O}_n$. He also represented these algebras as crossed products of a UHF-algebra by an endomorphism, and he used this representation to prove the simplicity of these algebras.  He saw this construction as a full corner of an ordinary crossed product. Later, Paschke \cite{Pa} gave an elegant generalization of Cuntz's results, and described the crossed product of a unital $C^*$-algebra by an endomorphism $\alpha:A\rightarrow A$, written $A\times_\alpha \N$, as the $C^*$-algebra generated by $A$ and an isometry $V$, such that $VaV^*=\alpha(a)$. Endomorphisms of $C^*$-algebras appeared elsewhere, and this led Stacey to give a modern description of their crossed products in terms of covariant representations and universal properties \cite{Stacey}. As noticed by Cuntz, when the endomorphism is injective, it is possible to transform it into automorphism and the isometries into unitaries via a direct limit construction. So, the crossed product by an endomorphism can be seen as a full corner of a crossed product by an automorphism. This allows to import results from the well-developed theory of crossed products by groups. That fact can be extended to actions by cancellative semigroups; for a more general exposition look at \cite{Laca} and the references therein.

Along his work, Cuntz defined purely infinite simple $C^*$-algebras, and he proved that the Cuntz algebras $\mathcal{O}_n$ are among those $C^*$-algebras. Purely infinite simple Cuntz-Krieger algebras (a generalization of Cuntz algebras introduced in \cite{CK}) where classified by using K-Theoretical invariants \cite{Ror}. However, the most spectacular result about purely $C^*$-algebras came from works of E. Kirchberg and N.C. Phillips \cite{Kie, Phi}, who proved that separable, unital purely infinite simple $C^*$-algebras in the bootstrap  class are classified  (up to isomorphism) by their $K$-theory. There were some definitions that generalize the notion of purely infinite $C^*$-algebras to the non-simple case. The most accepted, and useful, was due to Kirchberg and R{\o}rdam \cite{KR}.  In \cite{Kie} Kirchberg gave also some nice classification results for non-simple purely infinite $C^*$-algebras through  bivariant $KK$-theory. \vspace{.2truecm}

Our main goals in this paper are to give conditions on an endomorphism $\alpha\in\text{End}(A)$ to guarantee that: (1) $A$ separates ideals of $A\times_\alpha \N$; (2) $A\times_\alpha \N$ is purely infinite. Our fundamental technique is seeing $A\times_\alpha \N$ as a full corner of a crossed product of  $\bar{A}\times_{\bar{\alpha}}\Z$, where $\bar{A}$ is the dilation of $A$ by $\alpha$. After reducing the situation of a crossed product by an automorphism, we will use results of Sierakowski \cite{Sie}, and of Pasnicu and  R{\o}rdam \cite{PR}. We will also give a concrete example of endomorphism satisfying the residual Rokhlin* property: it is the so called shift endomorphism, that was constructed by Cuntz \cite{Cu} and later studied by Dykema and R{\o}rdam \cite{DR}.

The contents of this paper can be summarized as follows: In Section $1$, we introduce basic notation, and we construct the dilation of an injective endomorphism. We then review results on actions by a single automorphism, i.e. actions of $\Z$, and we show equivalent conditions for this action being topological and essentially free. We finally recall the residual Rokhlin* property defined by Sierakowski. In Section $2$, we prove that the dilation of the shift endomorphism is topologically free. In particular, given a strongly self-absorbing  $C^*$-algebra with a non-trivial projection $\D$, the shift endomorphism is topologically free. Even more, this allows us to twist any endomorphism on a $\D$-absorbing $C^*$-algebra into one that is essentially free and that induces the same $K$-theory map. In Section $3$, we give conditions for a crossed product by a single endomorphism being (non-simple) purely infinite. Finally, in Section $4$, we  construct various interesting examples of simple and non-simple purely infinite $C^*$-algebras. In particular, in an easy way, we construct purely infinite $C^*$-algebras with torsion in their $K_1$ groups.  
  
Throughout the article we will denote by $\N$ the subsemigroup of $\Z$ given by $\{1,2,\ldots\}$ and $\Z^+$ the monoid $\N\cup \{0\}$.

\section{$\N$ and $\Z$ actions on $C^*$-algebras}

Every injective endomorhism $\alpha$ of a $C^*$-algebra $A$ can be canonically dilated to an automorphism $\bar{\alpha}$ into a bigger $C^*$-algebra $\bar{A}$, and therefore one has an intuitive way to extend the spectral theory of  automorphisms to endomorphisms. We will see later that this is the correct one since the associated crossed products are strongly Morita equivalent.  Given an endomorphism $\alpha:A\longrightarrow A$, we define the inductive system $\{A_i,\gamma_i\}_{i\in \N}$ given by $A_i:=A$ and $\gamma_i=\alpha$ for every $i\in \N$. Let $\bar{A}:=\varinjlim  \{A,\alpha\}$ the \emph{dilation of $A$ by $\alpha$}. For any $i\in\N$, $\alpha_{i,\infty}:A\longrightarrow \bar{A}$ denotes the canonical map.  The diagram 
\[
{
\def\labelstyle{\displaystyle}
 \xymatrix{ A \ar[d]^{\alpha}\ar[r]^{\alpha}  & A\ar[d]^{\alpha}\ar[r]^{\alpha} & A\ar[d]^{\alpha}\ar[r]^\alpha & \cdots\ar[r]  & \bar{A}\ar[d]^{\bar{\alpha}}\\ A \ar[r]_{\alpha}  & A\ar[r]_{\alpha} & A\ar[r]_\alpha & \cdots\ar[r] & \bar{A}  }
}\]
gives rise to an automorphism $\bar{\alpha}:\bar{A}\longrightarrow \bar{A}$, that is called \emph{the dilation of $\alpha$}. 
Recall that given an automorphism $\alpha\in \text{Aut }(A)$ we can define the \emph{spectrum of $\alpha$}, denoted by $\text{Spec }(\alpha)$, as the Gelfand spectrum of $\alpha$ viewed as the element of the Banach algebra $\mathcal{B}(A)$, bounded linear operators of $A$, and the \emph{Connes' spectrum of $\alpha$} is defined as $$\mathbb{T}(\alpha):=\bigcap_{B\in \mathcal{H}^\alpha(A)} \text{Spec }(\alpha_{|B})$$ where $ \mathcal{H}^\alpha(A)$ is the set of all the hereditary and $\alpha$-invariant sub-$C^*$-algebras $B$, i.e., $\alpha(B)=B$. Given a $C^*$-algebra $A$ we define by $\mathcal{I}(A)$ the set of the closed two-sided ideals of $A$, and given an endomorphism $\alpha\in \text{End }(A)$ we denote by $\mathcal{I}^\alpha(A)$ the set of all  $\alpha$-invariant ideals $I$ of $A$, i.e., $\alpha^{-1}(I):=\{x\in A:\alpha(x)\in I\}=I$. 

The following equivalent statements follows from Olesen and Pedersen \cite[Theorem 10.4]{OP3} and Sierakowski \cite{Sie}. These authors  studied actions of more general groups $G$, but in the situation that $G=\Z$ all simplifies in the following way: Given an automorphism $\alpha\in \text{Aut }(A)$ the following statements are equivalent:
\begin{enumerate}
\item[(1)] $\mathbb{T}(\alpha)=\mathbb{T}$,
\item[(2)] $\alpha^n$ is \emph{properly outer} \cite{El} for every $n\in \N$, i.e., $\|\alpha^n_{|I}-\text{Ad }U\|=2$ for every $\alpha$-invariant ideal $I$ of $A$ and $U$ a unitary in $M(I)$,
\item[(3)] the induced action $\Z\curvearrowright \widehat{A}$ on the space of equivalence classes of irreducible representations of $A$ is \emph{topologically free}, i.e., $\{[\pi]\in \widehat{ A}:[\pi\circ \alpha^n]=[\pi]\text{ then }n=0\}$ is dense in $\widehat{A}$,
\item[(4)] $A$ has \emph{the intersection property}, i.e., given any non-zero ideal $J$ of $A\times_\alpha \Z$ we have that $J\cap A\neq 0$,
\item[(5)] $\alpha$ satisfies the Rokhlin* property \cite{Sie} (defined below).
\end{enumerate}
Moreover, if $A$ is $\alpha$-simple, i.e., $\alpha(I)=I$ implies that $I=0,A$, then $(1)-(5)$ are equivalent to
\begin{enumerate}
\item[(6)] $\alpha^n$ is \emph{multiplier outer} for every $n\in \Z\setminus{0}$, i.e., $\alpha^n\neq \text{Ad }U$ for every unitary $U$ in the multiplier algebra $M(A)$.
\end{enumerate}

The Rokhlin* property was defined by Sierakowski \cite{Sie} for more general groups $G$. It is weaker that $G\curvearrowright \widehat{A}$ being topologically free \cite[Theorem 2.11]{Sie}, and both are equivalent when $A$ is commutative. However, if $G$ is discrete (e.g. $G=\Z$) then $\alpha$ having the Rokhlin* property implies that $\alpha$ is properly outer \cite[Theorem 2.19]{Sie}.

Observe that given any ideal $J$ of $A\times_\alpha \Z$ we have that the ideal $J\cap A$ is a $\alpha$-invariant ideal of $A$, and therefore the map $\Phi:\mathcal{I}(A\times_\alpha \Z)\longrightarrow \mathcal{I}^\alpha(A)$ defined by $J\longmapsto J\cap A$ is a surjective map. In general this map is not injective, but a necessary condition for this map being injective can be given. First observe that given any $\alpha$-ideal $I$ of $A$ we have the following short exact sequence,
$$0\longrightarrow I\times_\alpha \Z\longrightarrow A\times_\alpha \Z\longrightarrow (A/I)\times_\alpha \Z\longrightarrow 0\,,$$ 
and therefore the following statements are equivalent:
\begin{enumerate}
\item[(1)] $\mathbb{T}(\alpha_{A/I})=\mathbb{T}$ for every $\alpha$-invariant ideal $I$ of $A$,
\item[(2)] the induced action $\Z\curvearrowright\widehat{A}$ is \emph{essentially free}, i.e., the induced action $\Z\curvearrowright X$ in any $\alpha$-invariant closed subset $X$ of $\widehat{A}$ is topologically free,
\item[(3)] $A$ \emph{separates ideals of $A\times_\alpha\Z$}, i.e., $\Phi$ is injective and hence bijective,
\item[(4)] every ideal $J$ of $A\times_\alpha \Z$ is of the form $I\times_\alpha \Z$ for some $\alpha$-invariant ideal $I$,
\item[(5)] $\alpha$ satisfies the residual Rokhlin* property.
\end{enumerate} 

We recall the definition of the Rokhlin* property given by Sierakowski: Given a $C^*$-algebra $A$, set $A^\infty:=l^\infty(A)/c_0(A)$, where $l^\infty(A)$ is the $C^*$-algebra of all bounded functions from $\N$ into $A$, and $c_0(A)$ is the ideal of $l^\infty(A)$ consisting of all sequences $(a_n)$ such that $\|a_n\|\longrightarrow 0$. There is a natural inclusion $A\subseteq A^\infty$. Moreover, given an  automorphism $\alpha\in \text{Aut }(A)$, there is a natural extension to an automorphism $\alpha\in  \text{Aut }((A^\infty)^{**})$.

An automorphism $\alpha\in \text{Aut }(A)$ of a separable $C^*$-algebra $A$ has the \emph{Rokhlin* property} provided that there exists a projection  $p=(p_n)\in (A^\infty)^{**}\cap A'$, that we will call  \emph{Rokhlin projection}, such that: 
\begin{enumerate}
\item Given any $k\in \Z\setminus\{0\}$, we have that $\alpha^k(p)p=0$,
\item Given any $a\in A\setminus\{0\}$ there exists $k\in\Z$ such that $a\alpha^k(p)\neq 0$.
\end{enumerate}
If given any $\alpha$-invariant ideal $I$ of $A$ the induced automorphism $\alpha_{A/I}\in \text{Aut }(A/I)$  has the Rokhlin* property, then we say that has the \emph{residual Rokhlin* property}. 

\begin{defi} Let $A$ be  separable $C^*$-algebra. Given an endomorphism $\alpha\in \text{End }(A)$ we say that $\alpha$ satisfies the (residual) Rokhlin* property if its dilation  $\bar{\alpha}$ does.
\end{defi}

\section{The Rokhlin Property and the $p$-shift endomorphism}

Given an endomorphism $\alpha\in \text{End }(A)$, we will fix conditions on a sequence $(a_n)\in A^\infty$ to construct a Rokhlin projection for the dilated endomorphism $\bar{\alpha}\in \text{Aut }(\bar{A})$, and hence to guarantee that $\alpha$ has the Rokhlin* property.

\begin{lem}\label{lem_aux_rokhlin} Let $A$ be a separable $C^*$-algebra and let $\alpha:A\longrightarrow A$ be an injective endomorphism. 
Given  a sequence $x=(x_n)\in A^\infty$, define the sequence $y=(\alpha_{n,\infty}(x_n))\in \bar{A}^\infty$, where $\bar{A}$ is the dilation of $A$ by $\alpha$. Then:
\begin{enumerate}
\item If  for every $k\in \N$ we have that $\|x_n\alpha^k(x_n)\|\longrightarrow 0$ when $n\rightarrow\infty$, then $y\bar{\alpha}^k(y)=0$ for every $k\in\Z$.
\item If for every $l\in \N$ and $a\in A$ we have that $\|[\alpha^{n}(a),x_{l+n}]\|\longrightarrow 0$ when $n\rightarrow\infty$, then $[b,y]=0$ for every $b\in \bar{A}$.
\item If for every $l\in \N$ and $a\in A$ we have that $\|\alpha^{n}(a)x_{l+n}\|\longrightarrow \|a\|$  when $n\rightarrow\infty$, then $\|by\|=\|b\|$ for every $b\in \bar{A}$.
\end{enumerate}
\end{lem}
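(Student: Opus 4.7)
The proof hinges on two standard features of the inductive-limit construction: because $\alpha$ is injective, each canonical map $\alpha_{n,\infty}:A\to\bar A$ is isometric, and $\bigcup_{n\in\N}\alpha_{n,\infty}(A)$ is norm-dense in $\bar A$. Two computational identities will also be used repeatedly: the telescoping rule $\alpha_{n,\infty}=\alpha_{n+j,\infty}\circ\alpha^{j}$ for all $j\geq 0$, and its consequence $\bar\alpha^{k}(\alpha_{n,\infty}(a))=\alpha_{n-k,\infty}(a)$, valid for every $k\in\Z$ once $n-k\geq 1$. These two identities let me transfer norm computations in $\bar A$ back to norm computations in $A$, which is the whole point.

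For part (1), at every sufficiently large coordinate $n$, the $n$-th component of $y\bar\alpha^{k}(y)$ equals $\alpha_{n,\infty}(x_n)\cdot\alpha_{n-k,\infty}(x_n)$. If $k>0$ I push the second factor up to level $n$ using $\alpha_{n-k,\infty}=\alpha_{n,\infty}\circ\alpha^{k}$, obtaining $\alpha_{n,\infty}(x_n\,\alpha^{k}(x_n))$, whose norm equals $\|x_n\alpha^{k}(x_n)\|$ by isometry and vanishes by hypothesis. If $k<0$ I instead push the first factor up to level $n-k=n+|k|$, obtaining $\alpha_{n-k,\infty}(\alpha^{|k|}(x_n)\,x_n)$; in the envisioned application $x_n$ is self-adjoint (the candidate Rokhlin projection), so $\|\alpha^{|k|}(x_n)x_n\|=\|x_n\alpha^{|k|}(x_n)\|$ and the hypothesis again applies.

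Parts (2) and (3) work by the same mechanism, and by density it suffices to verify each conclusion on elements of the form $b=\alpha_{l,\infty}(a)$. For $n\geq l$ I rewrite $b=\alpha_{n,\infty}(\alpha^{n-l}(a))$ and compute at coordinate $n$: the commutator becomes $\alpha_{n,\infty}([\alpha^{n-l}(a),x_n])$, while $by$ becomes $\alpha_{n,\infty}(\alpha^{n-l}(a)\,x_n)$. Taking norms by isometry and substituting $m=n-l$ into the hypotheses yields $\|[b,y]_n\|\to 0$ for (2) and $\|(by)_n\|\to \|a\|=\|b\|$ for (3). Both conclusions then extend from the dense subset $\bigcup_{l}\alpha_{l,\infty}(A)$ to all of $\bar A$ by continuity of multiplication and of the norm.

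The only real bookkeeping issue is deciding at which level of the inductive system to align the two factors of a given product, and this choice is dictated by the sign of $k$ in part (1) and by the inequality $n\geq l$ in parts (2) and (3); the rest is routine. Injectivity of $\alpha$ is indispensable, since without isometry of the maps $\alpha_{n,\infty}$ one could not pass norm information freely between $A$ and $\bar A$.
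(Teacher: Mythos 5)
Your proposal is correct and follows essentially the same route as the paper: componentwise computation using the telescoping identities and the isometry of the canonical maps $\alpha_{n,\infty}$, followed by reduction to the dense subalgebra $\bigcup_{l}\alpha_{l,\infty}(A)$ (the paper just writes out the $\varepsilon/3$-estimates explicitly instead of invoking continuity). You even flag the small order-of-factors issue in part (1) for $k<0$ (needing $\|\alpha^{|k|}(x_n)x_n\|$ versus the hypothesized $\|x_n\alpha^{|k|}(x_n)\|$), which the paper passes over silently and which is harmless since the intended $x_n$ are projections.
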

\begin{proof}
$(1)$ Suppose there exists a sequence $x=(x_n)\in A^\infty$ satisfying $\|x_n\alpha^k(x_n)\|\longrightarrow 0$ for every $k\in \N$ when $n\rightarrow\infty$.  
Now observe that, given $k,n\in \N$, we have that  
$$\bar{\alpha}^k(y_n)= \bar{\alpha}^k( \alpha_{n,\infty}(x_n))= \alpha_{n,\infty}(\alpha^k(x_n)) \qquad\text{and}\qquad \bar{\alpha}^{-k}(y_n)= \bar{\alpha}^{-k}( \alpha_{n,\infty}(x_n))= \alpha_{n+k,\infty}(x_n)\,.$$ 
So, given $k\in\N$ it follows that
$$\| \bar{\alpha}^k(y_n)y_n\|=\| \alpha_{n,\infty}( \alpha^k(x_n)x_n)\|=\| \alpha^k(x_n)x_n\|\longrightarrow 0$$
when $n\rightarrow\infty$, and
$$\| \bar{\alpha}^{-k}(y_n)y_n\|=\| \alpha_{n+k,\infty}(x_n \alpha^{k}(x_n))\|=\|x_n \alpha^{k}(x_n)\|\longrightarrow 0$$
when $n\rightarrow\infty$. Thus,  $y$ and $\bar{\alpha}^k(y) $ are pairwise orthogonal elements of $\bar{A}^\infty$ for every $k\in\Z\setminus\{0\}$, as we wanted.	\vspace{.2truecm}

Now, given any $b\in \bar{A}$ and $\varepsilon>0$, there exist $a\in A$ and $l\in\N$ such that $\|b-\alpha_{l,\infty}(a)\|<\varepsilon/3$. \vspace{.2truecm}

$(2)$ By hypothesis, there exists $N_a\in\N$ such that $\|\alpha^n(a)x_{l+n}-x_{l+n}\alpha^n(a)\|<\varepsilon/3$ for every $n\geq N_a$. So, we have that 
\begin{align*}
\|\alpha_{l,\infty}(a) y_{n}-y_{n}\alpha_{l,\infty}(a)\| & =\|\alpha_{n,\infty}(\alpha^{n-l}(a)x_{n}) -\alpha_{n,\infty}(x_{n}\alpha^{n-l}(a))\| \\
 & =\| \alpha^{n-l}(a)x_{n} -x_{n}\alpha^{n-l}(a)\|<\varepsilon/3\,
 \end{align*}
 for every $n\geq l+N_a$. Therefore,  we have that 
 \begin{align*}
 \|by_{n}-y_{n}b\| & \leq \|by_{n}-\alpha_{l,\infty}(a)y_{n}\|+\|\alpha_{l,\infty}(a)y_{n} - y_{n}\alpha_{l,\infty}(a)\| + \|y_{n}	\alpha_{l,\infty}(a)-y_{n}b\| \\
 & < \varepsilon/3+\varepsilon/3+\varepsilon/3=\varepsilon\,
\end{align*}
for every $n\geq l+N_a$. 
Thus, we have that $yb=by$ for every $b\in \bar{A}$.

$(3)$ By hypothesis there exists $N_a\in\N$, such that $\|\alpha^n(a)x_{l+n}\|\geq \|a\|-\varepsilon/3$ for every $n\geq N_a$. So we have that 
$$\|\alpha_{l,\infty}(a) y_{n}\|  =\|\alpha_{n,\infty}(\alpha^{n-l}(a)x_{n})\|
 =\|\alpha^{n-l}(a)x_{n}\|\geq \|a\|-\varepsilon/3\,,$$
for every $n\geq l+N_a$. Therefore,  we have that 
$$\|\alpha_{l,\infty}(a)y_{n}\|\leq \|\alpha_{l,\infty}(a) y_{n}-b y_{n}\|+\|b y_{n}\|<\varepsilon/3+\|by_{n}\| ,$$
for every $n\geq l+N_a$, and thus
$$\|\alpha_{l,\infty}(a)\|-\varepsilon<\|\alpha_{l,\infty}(a)\|-\varepsilon/3-\varepsilon/3\leq \|\alpha_{l,\infty}(a) y_{n}\|-\varepsilon/3<\|by_{n}\|$$
for every $n\geq l+N_a$. But since $\|b\|-\varepsilon\leq\|a\|=\|\alpha_{l,\infty}(a)\|$, it follows that 
$$\|b\|-2\varepsilon\leq \|by_{n}\|\,$$
for every $n\geq l+N_a$. As $\varepsilon$ is arbitrary, we have that $\|b\|=\|by\|$, as desired.
\end{proof}

Observe that, given a projection $x=(x_n)\in A^\infty$ satisfying the hypothesis $(1)-(3)$ of the above Lemma, the sequence $p=(\alpha_{n,\infty}(x_n))\in \bar{A}^\infty$ is a Rokhlin projection for the automorphism $\bar{\alpha}$. Observe that then $\alpha$ satisfies even a stronger property than the Rokhlin* property, because $\|pb\|=\|b\|$ for every $b\in \bar{A}$. 

Given a unital simple $C^*$-algebra $A$ with a non-trivial projection $p$, Dykema and R{\o}rdam defined in \cite{DR} the so called \emph{Bernoulli $p$-shift} endomorphism $\Delta_p: A^{\otimes \infty}\longrightarrow A^{\otimes \infty}$ by $x\longmapsto p\otimes x$ for every $x\in A^{\otimes \infty}$. Choosing $\otimes=\otimes_{max}$ or $\otimes_{min}$ we have that $\Delta_p$ is injective. They proved that any power of the dilation automorphism $\bar{\Delta_p}$ is multiplier outer. Hence, since $A^{\otimes \infty}$ is simple, $\bar{\Delta_p}$ must be properly outer.  We will prove a more general result.

\begin{prop}\label{lem_rokhlin} Let $A$ be a nuclear unital $C^*$-algebra with a non-trivial projection $p$, and let $\Delta_p:A^{\otimes \infty}\longrightarrow A^{\otimes \infty}$ be the $p$-shift endomorphism. Then, $\Delta_p$ satisfies the Rokhlin* property. 
\end{prop}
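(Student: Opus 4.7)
The plan is to prove that the dilation $\bar{\Delta_p}\in\aut(\bar{A})$ has the Rokhlin* property, which by Definition~\ref{defi} gives the statement for $\Delta_p$. For this I will exhibit a sequence of projections $x=(x_n)\in(A^{\otimes\infty})^\infty$ meeting the three hypotheses of Lemma~\ref{lem_aux_rokhlin}; the observation immediately following that lemma then produces a projection $y=(\alpha_{n,\infty}(x_n))\in\bar{A}^\infty$ which is a Rokhlin projection for $\bar{\Delta_p}$. In fact one obtains the strengthening $\|by\|=\|b\|$ for every $b\in\bar{A}$, so that Rokhlin* condition (2) holds trivially with $k=0$.

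Indexing the tensor factors of $A^{\otimes\infty}$ by $\N$ and setting $q:=1-p$, a non-zero projection since $p$ is non-trivial, I take
\[
x_n\;:=\;1^{\otimes 2n}\otimes q\otimes p^{\otimes n}\otimes 1^{\otimes\infty}\in A^{\otimes\infty}.
\]
Thus $x_n$ is a projection whose only non-trivial tensor factors sit at position $2n+1$ (a $q$) and positions $2n+2,\ldots,3n+1$ (each a $p$). For hypothesis (1), whenever $1\leq k\leq n$ the element $\Delta_p^k(x_n)=p^{\otimes k}\otimes x_n$ carries its $q$-factor at position $2n+k+1$, while $x_n$ has a $p$ at that same position (since $2n+k+1\in[2n+2,3n+1]$). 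The product $x_n\Delta_p^k(x_n)$ therefore contains a $pq=0$ factor and vanishes, so $\|x_n\Delta_p^k(x_n)\|=0$ for all $n\geq k$.

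For hypotheses (2) and (3), a standard $\varepsilon/3$-argument reduces each statement to the case $a=a^{(1)}\otimes\cdots\otimes a^{(m)}\otimes 1^{\otimes\infty}$. Then $\Delta_p^n(a)$ is non-trivial only at positions $\{1,\ldots,n+m\}$, whereas $x_{l+n}$ is non-trivial only at $\{2(l+n)+1,\ldots,3(l+n)+1\}$. Once $n>m-2l$ these supports are disjoint, so $\Delta_p^n(a)$ and $x_{l+n}$ commute, and---using that $A$ is nuclear, so the tensor norm is multiplicative across disjoint supports---
\[
\|\Delta_p^n(a)\,x_{l+n}\|=\|\Delta_p^n(a)\|\cdot\|x_{l+n}\|=\|a\|.
\]
Approximation then yields both (2) and (3) for arbitrary $a\in A^{\otimes\infty}$.

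The chief subtlety is the simultaneous fulfilment of (1) and (2)--(3): condition (1) forces the $q$ in $x_n$ to be followed by $p$'s over a range growing with $n$, so that every shift $\Delta_p^k$ with $k\leq n$ produces a $pq$ collision; but (2)--(3) force the non-trivial region of $x_n$ to lie asymptotically far to the right of any fixed $\Delta_p^n(a)$. The $2n$-fold left-padding by $1$'s---or indeed any padding growing strictly faster than $n$---reconciles the two requirements, and once the right pattern is chosen the verification is the routine slot-by-slot calculation sketched above.
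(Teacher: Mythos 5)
Your proposal is correct and follows essentially the same route as the paper: the paper also feeds a sequence of projections of the form $1^{\otimes 2n}\otimes(1-p)\otimes p\otimes\cdots\otimes p\otimes 1^{\otimes\infty}$ into Lemma~\ref{lem_aux_rokhlin}, verifying orthogonality via the $p(1-p)=0$ collision and conditions (2)--(3) via eventual disjointness of supports together with nuclearity. The only differences are cosmetic (the paper uses $2n$ trailing $p$'s where you use $n$, and reduces to general elements of $A^{\otimes j}\otimes 1$ rather than to single elementary tensors, which is the cleaner way to phrase the approximation step).
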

\begin{proof} To simplify notation set $B:=A^{\otimes \infty}$ and $\alpha:=\Delta_p$. Given $n\in\N$, we define the projection 
$$q_n:=1\otimes \cdots^{(2n)}\cdots \otimes1\otimes (1-p)\otimes p\otimes \cdots^{(2n)}\cdots\otimes p\otimes 1_{A^{\otimes \infty}}\in B\,.$$

Observe that $q=(q_n)\in B^\infty$ is a projection, and then by Lemma \ref{lem_aux_rokhlin} it is enough to check the following: 
\begin{enumerate}
\item $\|q_n\alpha^k(q_n)\|\longrightarrow 0$ when $n\rightarrow\infty$, for every $k\in \N$,
\item $\|[\alpha^{n}(b),q_{l+n}]\|\longrightarrow 0$ when $n\rightarrow\infty$, for every $l\in \N$ and $b\in B$, 
\item $\|\alpha^{n}(b)q_{l+n}\|\longrightarrow \|b\|$  when $n\rightarrow\infty$, for every $l\in \N$ and $b\in B$,
\end{enumerate}
to prove that $q'=(\alpha_{n,\infty}(q_n))  \in \bar{B}^\infty$ is a Rokhlin projection for $\bar{\alpha}$.

First, we have that $q_n$ and $\alpha^k(q_n)$ are orthogonal projections for every $k\in\N$, so $(1)$ holds. 

Now given any $b\in B$ and $\varepsilon>0$, there exists $j\in \N$ and  $a\in A^{\otimes j}$ such that $\|b-a\otimes 1_{A^{\otimes \infty}}\|<\varepsilon/2$. Therefore, given any $l,n\in \N$ we have that 
$$\alpha^n(a\otimes 1_{A^{\otimes \infty}})=p\otimes \cdots^{(n)}\cdots\otimes p\otimes a\otimes 1_{A^{\otimes \infty}}\,,$$
and
$$q_{l+n}=1\otimes \cdots^{(2(l+n))}\cdots \otimes1\otimes (1-p)\otimes p\otimes \cdots^{(2(l+n))}\cdots\otimes p\otimes 1_{A^{\otimes \infty}}\in B\,.$$
Thus, whenever $n\geq j-2l$, it follows that 
$$(*)\qquad q_{l+n}\alpha^n(a\otimes 1_{A^{\otimes \infty}})=\alpha^n(a\otimes 1_{A^{\otimes \infty}})q_{l+n}=\alpha^n(a\otimes 1_{A^{\otimes \infty}})q_{l+n}=p\otimes \cdots^{(n)}\cdots \otimes p\otimes a\otimes z_{l,n}$$
where $z_{l,n}$ is a projection of $A^{\otimes \infty}$. Therefore, we have that
\begin{align*}\|\alpha^n(b)q_{l+n}-q_{l+n}\alpha^n(b)\| & \leq \|\alpha^n(b)q_{l+n}-\alpha^n(a\otimes 1_{A^{\otimes \infty}})q_{l+n}\| \\ & +\|\alpha^n(a\otimes 1_{A^{\otimes \infty}})q_{l+n}-q_{l+n}\alpha^n(a\otimes 1_{A^{\otimes \infty}})\| \\ & +\|q_{l+n}\alpha^n(a\otimes 1_{A^{\otimes \infty}})-q_{l+n}\alpha^n(b)\| \\
 & <\varepsilon/2+0+\varepsilon/2=\varepsilon\,
\end{align*}
for every $n\geq j-2l$. Then $\|[\alpha^{n}(b),q_{l+n}]\|\longrightarrow 0$ when $n\rightarrow\infty$, so $(2)$ holds.

 Finally, observe that using the canonical isomorphism $C^{\otimes \infty}\cong C^{\otimes n}\otimes C^{\otimes j}\otimes C^{\otimes \infty}$, and applying  $(*)$  and the nuclearity of $A$ it follows that 
$$\|\alpha^n(a\otimes 1_{A^{\otimes \infty}})q_{l+n}\|=\|a\|$$
for every $n\geq j-2l$. So, we have that
$$\|\alpha^n(b)\|-\varepsilon< \|\alpha^n(a\otimes 1_{A^{\otimes \infty}})\|-\varepsilon/2=\|\alpha^n(a\otimes 1_{A^{\otimes \infty}})q_{l+n}\|-\varepsilon/2<\|\alpha^n(b)q_{l+n}\|$$
for every $n\geq j-2l$. Then, $q'=(q_n)$ also satisfies $(3)$.

Thus, by Lemma \ref{lem_aux_rokhlin} $p=(\alpha_{n,\infty}(q_n))\in \bar{B}^\infty$ is a Rokhlin projection for $\overline{\Delta_p}$.
\end{proof}

\begin{exem}
Given any $n\in\N$ we set $p:=e_{1,1}\in M_n(\mathbb{C})$. Then, the $p$-shift endomorphism $\Delta_{p}$ defined on the UHF-algebra $M_{n^\infty}:=\bigotimes_{i=1}^\infty M_n(\mathbb{C})$ is the Cuntz's endomorphism. By Proposition \ref{lem_rokhlin}, the automorphism $\bar{\Delta_p}:M_{n^\infty}\otimes\mathcal{K}\longrightarrow M_{n^\infty}\otimes\mathcal{K}$ satisfies the  Rokhlin* property.
\end{exem}

\begin{exem} Let $A=\C\oplus \C$ and let $p:=(1,0)$. Then we have that $A^{\otimes \infty}\cong C(X)$, where $X=\{0,1\}^{\N}$ is the Cantor set. In this case, the $p$-shift endomorphism $\Delta_p:C(X)\longrightarrow C(X)$  is defined as $\Delta_p(f)=f\circ \delta$ for every $f\in C(X)$, where $\delta(\textbf{x})=(0,\textbf{x})$ for every $\textbf{x}\in X$. By Proposition \ref{lem_rokhlin}, $\bar{\Delta_p}$ is properly outer. Let us consider the ideal $I=\{f\in C(X):f(\textbf{0})=0\}$, that is a $\Delta_p$-invariant ideal. Then $C(X)/I\cong \C$ and $(\Delta_p)_{C(X)/I}=\overline{(\Delta_p)_{C(X)/I}}=\text{Id}$, so it is not properly outer.
\end{exem}

Now, we will study the $p$-shift endomorphism on an interesting class of $C^*$-algebras.

Let $\D$ be a unital and separable \emph{strongly self-absorbing} $C^*$-algebra \cite{TW}. Then, $\D\cong \D^{\otimes n}\cong \D^{\otimes \infty}$ for every $n\in \N$. Also, $\D$ has an approximately inner flip, i.e. the homomorphism $\sigma:\D\otimes \D\longrightarrow \D\otimes \D$ defined by $\sigma(a\otimes b)=b\otimes a$ for every $a,b\in \D$ is approximately unitary equivalent to the identity map. Recall also that every strongly self-absorbing $C^*$-algebra is nuclear.  

We will first compute which map induces $\Delta_p$ at the level of $K$-theory. Recall that a strongly self-absorbing $C^*$-algebra $\D$ satisfying the Universal Coefficients Theorem (UCT) has trivial $K_1$ group \cite[Proposition 5.1]{TW} and torsion-free $K_0$ group. Therefore, the K\"unneth formulas say that we have an isomorphism $K_0(\D^{\otimes \infty})=K_0(\D\otimes \D^{\otimes \infty})\cong K_0(\D)\otimes K_0(\D^{\otimes \infty})$ induced by the map $[a]\otimes [b]\longmapsto [a\otimes b]$ for projections $a\in \D$, $b\in \D^{\otimes\infty}$. Thus, if we define $[p]\cdot x:= [p]\otimes x$ for every $x\in K_0(\D^{\otimes \infty})$, the following Lemma comes straightforward.

\begin{lem}\label{1.5} Let $\D$ be a strongly self-absorbing $C^*$-algebra satisfying the UCT with a non-trivial projection $p$. Then $K_0(\Delta_p)=[p]\cdot \text{Id}_{K_0(\D^{\otimes \infty})}$.
\end{lem}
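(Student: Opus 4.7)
The plan is to chase the definition of $\Delta_p$ through the K\"unneth isomorphism that was fixed in the paragraph preceding the statement. Set $E := \D^{\otimes \infty}$. By strong self-absorption, identify $E$ with $\D \otimes E$ via the canonical ``shift by one'' isomorphism $\sigma: \D \otimes E \to E$ sending $a \otimes (x_1 \otimes x_2 \otimes \cdots)$ to $a \otimes x_1 \otimes x_2 \otimes \cdots$. With respect to this identification the endomorphism $\Delta_p$ factors as $\Delta_p = \sigma \circ \iota_p$, where $\iota_p: E \to \D \otimes E$ is the $*$-homomorphism $x \mapsto p \otimes x$. So for any projection $q$ over $E$,
\[
K_0(\Delta_p)[q] \;=\; [\,p \otimes q\,] \;\in\; K_0(\D \otimes E) = K_0(E).
\]

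Next I would invoke the K\"unneth formula recalled in the paragraph before the lemma. Since $\D$ satisfies the UCT, has $K_1(\D) = 0$ and torsion-free $K_0(\D)$, the external product
\[
K_0(\D) \otimes K_0(E) \;\longrightarrow\; K_0(\D \otimes E), \qquad [a] \otimes [b] \;\longmapsto\; [a \otimes b],
\]
is an isomorphism. Applied to $[p \otimes q]$, it sends it to $[p] \otimes [q]$. Combined with the absorption identification $K_0(\D \otimes E) = K_0(E)$, this is precisely the isomorphism $K_0(E) \cong K_0(\D) \otimes K_0(E)$ fixed before the lemma, under which $[p \otimes q]$ corresponds to $[p] \otimes [q]$. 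By the defining formula $[p] \cdot x := [p] \otimes x$, this is $[p] \cdot [q]$. Therefore $K_0(\Delta_p)[q] = [p] \cdot [q]$ on every generator, hence $K_0(\Delta_p) = [p] \cdot \mathrm{Id}_{K_0(E)}$.

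The main ``obstacle'' is really none at all: all the substantive work (verifying that the K\"unneth map is an isomorphism, that it respects elementary tensors, and that $\D$ has the required $K$-theoretic features) has been done in the discussion preceding the lemma, so what remains is a single one-line identification of $[p\otimes q]$ with $[p]\otimes[q]$. This is why the paper flags the result as immediate; the only thing to be careful about is not to confuse the two isomorphisms $\D\otimes E \cong E$ and $K_0(\D)\otimes K_0(E)\cong K_0(E)$, both of which are being implicitly used to identify the source and target of $K_0(\Delta_p)$.
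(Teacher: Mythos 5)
Your proposal is correct and follows exactly the route the paper intends: the paper gives no explicit proof, declaring the lemma ``straightforward'' from the preceding K\"unneth discussion, and your argument is precisely the expansion of that remark --- writing $\Delta_p$ as $\sigma\circ\iota_p$, computing $K_0(\Delta_p)[q]=[p\otimes q]$, and identifying this with $[p]\otimes[q]=[p]\cdot[q]$ under the fixed K\"unneth isomorphism. Your closing caution about keeping the two identifications $\D\otimes\D^{\otimes\infty}\cong\D^{\otimes\infty}$ and $K_0(\D)\otimes K_0(\D^{\otimes\infty})\cong K_0(\D^{\otimes\infty})$ separate is exactly the right point of care.
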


We will prove some results about ideal structure for suitable tensor products of $C^*$-algebras. We thank Nate Brown for the proof of the following result.

\begin{lem}\label{Nate}
Let $A$ be an exact simple $C^*$-algebra, and let $B$ be any $C^*$-algebra. If $I\lhd A\otimes B$, then there exists $J\lhd B$ such that $I=A\otimes J$.
\end{lem}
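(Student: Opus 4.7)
My approach is to set
$$J:=\{b\in B: a\otimes b\in I \text{ for all } a\in A\},$$
and aim for $I=A\otimes J$. One checks that $J$ is a closed two-sided ideal of $B$: if $b\in J$ and $c\in B$, then for any $a\in A$ and any approximate unit $(e_\lambda)$ of $A$, $a\otimes bc=\lim_\lambda (a\otimes b)(e_\lambda\otimes c)\in I$, and symmetrically. By construction $A\otimes J\subseteq I$. For the reverse inclusion, exactness of $A$ applied to $0\to J\to B\to B/J\to 0$ yields $(A\otimes B)/(A\otimes J)\cong A\otimes(B/J)$; letting $\bar I$ denote the image of $I$ under this quotient, the claim reduces to proving $\bar I=0$.

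I first show that $\bar I$ contains no nonzero elementary tensor. Fix $b'\in B/J$ with $b'\neq 0$ and set $K_{b'}:=\{a\in A: a\otimes b'\in\bar I\}$. The ideal property of $\bar I$ combined with an approximate-unit argument in $B/J$ gives $cad\otimes b'\in\bar I$ for $c,d\in A$, so $K_{b'}$ is a closed two-sided ideal of $A$. By simplicity of $A$, $K_{b'}\in\{0,A\}$; if $K_{b'}=A$, then $A\otimes b\subseteq I$, which lifts through the exact sequence to force $b\in J$, i.e.\ $b'=0$—a contradiction. Hence $K_{b'}=0$ whenever $b'\neq 0$.

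The remaining step is to upgrade ``no nonzero elementary tensor in $\bar I$'' to ``$\bar I=0$'', and I expect this to be the main technical obstacle. The planned tool is Kirchberg's slice-map characterization of exactness: for any $C^*$-algebra $C$, any closed ideal $L\lhd C$, and any $y\in A\otimes C$, one has $y\in A\otimes L$ iff $S_\phi(y)\in L$ for every $\phi\in A^*$, where $S_\phi:A\otimes C\to C$ is the slice map $S_\phi(a\otimes c):=\phi(a)c$. Setting
$$L:=\overline{\mathrm{span}\{S_\phi(y): y\in\bar I,\,\phi\in A^*\}}\lhd B/J$$
(the ideal property of $L$ follows because $B/J$-multiplication passes under slice maps in the limit along an approximate unit of $A$), the slice-map property yields $\bar I\subseteq A\otimes L$. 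To finish, I need $L=0$; my plan is to combine the simplicity argument applied to the dual ``right'' slice maps $T_\psi:A\otimes(B/J)\to A$—whose closed image $\overline{T_\psi(\bar I)}$ is a two-sided ideal of $A$, hence $0$ or $A$—with the previously established absence of nonzero elementary tensors in $\bar I$ to exclude the case $\overline{T_\psi(\bar I)}=A$, via a GNS-type argument applied to the state $\psi$.
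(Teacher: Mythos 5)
Your first half is sound: $J:=\{b\in B: A\otimes b\subseteq I\}$ is a closed two-sided ideal, $A\otimes J\subseteq I$, exactness of $A$ identifies $(A\otimes B)/(A\otimes J)$ with $A\otimes(B/J)$, and the simplicity argument showing that the image $\overline{I}$ contains no nonzero elementary tensor is correct (in particular, since $A\otimes J\subseteq I$, the preimage of $\overline{I}$ is exactly $I$, so $K_{b'}=A$ really does force $b\in J$). The problem is that everything after that is a plan rather than a proof, and the plan as stated does not close. Setting $L:=\overline{\text{span}}\{S_\phi(y):y\in\overline{I},\ \phi\in A^*\}$ and invoking the slice-map property only yields $\overline{I}\subseteq A\otimes L$, and the statement ``$L=0$'' is \emph{literally equivalent} to ``$\overline{I}=0$'' (slice maps, equivalently product functionals, separate points of the minimal tensor product), so no ground has been gained. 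More seriously, the proposed contradiction is not there: if $\overline{I}\neq 0$ then indeed $\overline{\text{span}}\,T_\psi(\overline{I})=A$ for some $\psi$, but density of the right slices in $A$ is perfectly compatible with $\overline{I}$ containing no nonzero elementary tensor --- the inclusion $\overline{I}\subseteq A\otimes L$ bounds $\overline{I}$ from above, not from below, so the ``no elementary tensors'' fact cannot be played against it. In fact, for simple exact $A$ the assertion ``a nonzero ideal of $A\otimes C$ contains a nonzero elementary tensor'' is equivalent to the lemma itself, so your reduction has not consumed any of its content.

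What is actually needed to finish is an approximation statement putting slices back \emph{inside} the ideal: for a \emph{pure} state $\phi$ on $A$ and $y\in\overline{I}$, one excises $\phi$ (Akemann--Anderson--Pedersen), obtaining positive norm-one elements $e_\lambda\in A$ with $\|e_\lambda a e_\lambda-\phi(a)e_\lambda^2\|\to 0$, so that $(e_\lambda\otimes 1)y(e_\lambda\otimes 1)\in\overline{I}$ is norm-close to the elementary tensor $e_\lambda^2\otimes S_\phi(y)$; simplicity of $A$ then upgrades this to $a\otimes S_\phi(y)\in\overline{I}$ for all $a\in A$, which against your elementary-tensor step forces $S_\phi(y)=0$ for every pure $\phi$ and hence $y=0$. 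This excision argument is precisely what is packaged in \cite[Corollary 9.4.6]{B-O}, which the paper's proof simply quotes: that corollary gives $I=\overline{\text{span}}\{A\odot I_B: I_B\lhd B,\ A\odot I_B\subseteq I\}$ outright, after which one only has to name $J$ as the closed span of the relevant $I_B$'s. So your route is viable in outline, but the decisive step is missing, and it is not a routine ``GNS-type'' verification.
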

\begin{proof}
By \cite[Corollary 9.4.6]{B-O}, 
$$I=\overline{\text{span}}\{ A\odot I_B : I_B\lhd B, A\odot I_B\subseteq I\}.$$
Consider $$J:=\overline{\text{span}}\{ I_B\lhd B : A\odot I_B\subseteq I\},$$ 
which is clearly an ideal of $B$. Now, pick 
$$X:=A\odot \left({\text{span}}\{ I_B : I_B\lhd B, A\odot I_B\subseteq I\}\right),$$ 
which is a dense linear subspace of $A\odot J$. We will show that $X$ is also a dense linear subspace of $I$. For, notice that
$$ {\text{span}}\{ A\odot I_B : I_B\lhd B, A\odot I_B\subseteq I\}=A\odot \left({\text{span}}\{ I_B : I_B\lhd B, A\odot I_B\subseteq I\}\right).$$
Since closures are unique, we conclude that $I=A\otimes J$, as desired.
\end{proof}

As a consequence, we have the following $C^*$-algebra version of Azumaya-Nakayama's Theorem.

\begin{prop}\label{Azumaya-Nakayama}
If $A$ is an exact simple $C^*$-algebra and $B$ is any $C^*$-algebra, then the map $I\mapsto A\otimes I$ defines a bijection between ideals of $B$ and ideals of $A\otimes B$
\end{prop}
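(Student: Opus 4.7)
The plan is to combine Lemma \ref{Nate} with a slice-map argument. By Lemma \ref{Nate}, every ideal $I$ of $A\otimes B$ has the form $I=A\otimes J$ for some ideal $J$ of $B$, so the assignment $J\mapsto A\otimes J$ is surjective onto the ideals of $A\otimes B$. That it is well-defined, i.e. that $A\otimes J$ really is an ideal of $A\otimes B$ for every $J\lhd B$, is precisely where exactness of $A$ enters: one obtains the short exact sequence
\[
0\longrightarrow A\otimes J\longrightarrow A\otimes B\longrightarrow A\otimes (B/J)\longrightarrow 0,
\]
exhibiting $A\otimes J$ as the kernel of a $*$-homomorphism.

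For injectivity I would recover $J$ from $A\otimes J$ via slice maps. Since $A$ is nonzero, pick any state $\phi$ on $A$ and an element $a_0\in A$ with $\phi(a_0)=1$. The left slice map $L_\phi\colon A\otimes B\to B$ sending $a\otimes b$ to $\phi(a)b$ extends to a bounded linear map on the minimal tensor product. Applied to $A\otimes J$, continuity of $L_\phi$ together with closedness of $J$ gives $L_\phi(A\otimes J)\subseteq J$, while the identity $L_\phi(a_0\otimes b)=b$ for $b\in J$ yields the reverse inclusion. Hence $L_\phi(A\otimes J)=J$.

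Consequently, if $J_1,J_2\lhd B$ satisfy $A\otimes J_1=A\otimes J_2$, applying $L_\phi$ to both sides yields $J_1=J_2$, which proves injectivity. Combined with Lemma \ref{Nate}, this establishes the required bijection. I do not anticipate any serious obstacle here, as the heavy lifting has already been done in Lemma \ref{Nate}; the only mild subtlety is that Lemma \ref{Nate} is phrased with respect to the minimal tensor product (through its appeal to \cite[Corollary 9.4.6]{B-O}), so one should take $\otimes=\otimes_{\min}$ throughout for consistency, which is also the setting in which slice maps behave well.
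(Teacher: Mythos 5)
Your proof is correct, but the injectivity step runs along a genuinely different track from the paper's. The paper recovers $J$ from $A\otimes J$ by intersecting with $B$ (viewed inside $A\otimes B$ as $\C\odot B$): it quotes an algebraic fact of Cohn to get $(A\odot J)\cap(\C\odot J)=\C\odot J$ and then passes to closures. You instead recover $J$ by hitting $A\otimes J$ with a slice map $\phi\otimes\mathrm{id}_B$ for a state $\phi$ on $A$, using continuity of the slice map on $A\otimes_{\min}B$ and closedness of $J$ for one inclusion and an element $a_0$ with $\phi(a_0)=1$ for the other. Both are valid; your route is the more standard $C^*$-algebraic one, it does not require identifying $B$ (or $\C$) with a subalgebra of $A\otimes B$, and it sidesteps the slightly delicate passage from an intersection of algebraic tensor products to an intersection of their closures. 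One small misattribution: exactness of $A$ is not ``precisely where'' well-definedness lives --- the closure of the algebraic two-sided ideal $A\odot J$ in $A\otimes_{\min}B$ is automatically a closed two-sided ideal, with or without exactness, so well-definedness is free; exactness is what powers Lemma \ref{Nate} (via \cite[Corollary 9.4.6]{B-O}), i.e.\ surjectivity. Your closing remark that everything should be read in $\otimes_{\min}$ for consistency with that citation is well taken and applies equally to the paper's own proof.
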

\begin{proof}
Clearly, it is a well-defined map. By Lemma \ref{Nate}, it is a surjective map. In order to prove injectivity of the map, notice that any ideal can be seen as a linear subspace of $A\otimes B$ via the identification $I=\C \otimes I=\C\odot I$.

By \cite[Proposition 4.7.3]{Cohn}, if $I\lhd B$, then 
$$(A\odot I)\cap B=(A\odot I)\cap(\C \odot I)=\C \odot I=I.$$
Thus, taking closures we have
$$(A\otimes I)\cap B=(A\otimes I)\cap(\C \otimes I)=\C \otimes I=I,$$
whence the above defined map is injective. So we are done.
\end{proof}

We say that a $C^*$-algebra $A$ \emph{absorbs $\D$} if  $A\cong A\otimes \D$. 

\begin{theor}\label{rokhlin_ideals} Let $\D$ be a strongly self-absorbing $C^*$-algebra with a non-trivial projection $p$, and let $A$ be a separable unital  $C^*$-algebra that absorbs $\D$. Then, given any injective endomorphism $\alpha\in \text{End }(A)$, the  endomorphism $\alpha\otimes \Delta_p\in \text{End }(A\otimes \D^{\otimes \infty})$   satisfies the residual Rokhlin* property. Moreover, if $\D$ satisfies the UCT then $K_*(\alpha\otimes \Delta_p)=[p]\cdot K_*(\alpha)$.
\end{theor}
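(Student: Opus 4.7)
The plan is to use Lemma \ref{lem_aux_rokhlin} with an explicit sequence to obtain the plain Rokhlin* property for $\alpha\otimes\Delta_p$, then to leverage Proposition \ref{Azumaya-Nakayama} to upgrade to the residual version, and finally to derive the $K$-theory identity from the K\"unneth formula and Lemma \ref{1.5}. The natural candidate for the sequence is $x_n:=1_A\otimes q_n\in A\otimes\D^{\otimes\infty}$, with $(q_n)$ the sequence of projections appearing in the proof of Proposition \ref{lem_rokhlin} for $\Delta_p$ on $\D^{\otimes\infty}$.

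To verify the hypotheses of Lemma \ref{lem_aux_rokhlin} for $(x_n)$ with respect to $\alpha\otimes\Delta_p$, condition (1) is immediate from the exact orthogonality $q_n\Delta_p^k(q_n)=0$ established in Proposition \ref{lem_rokhlin}, since $x_n(\alpha\otimes\Delta_p)^k(x_n)=\alpha^k(1_A)\otimes q_n\Delta_p^k(q_n)=0$. For (2) and (3), I would first test on an elementary tensor $b=a\otimes c$ with $c$ in the algebraic tensor power $\D^{\otimes j}$: the identity $(*)$ of Proposition \ref{lem_rokhlin} says that for $n\geq j-2l$ we have $\Delta_p^n(c)q_{l+n}=q_{l+n}\Delta_p^n(c)=p^{\otimes n}\otimes c\otimes z_{l,n}$, whence the commutator $[(\alpha\otimes\Delta_p)^n(b),1_A\otimes q_{l+n}]$ vanishes exactly and $\|(\alpha\otimes\Delta_p)^n(b)(1_A\otimes q_{l+n})\|=\|\alpha^n(a)\|\cdot\|c\|=\|b\|$, using that $\alpha^n$ is isometric as an injective $*$-homomorphism and that $\D^{\otimes\infty}$ is nuclear so the tensor norm is unambiguous and multiplicative on elementary tensors. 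A standard $\varepsilon/3$ approximation by finite sums of elementary tensors then extends (2) and (3) to arbitrary $b\in A\otimes\D^{\otimes\infty}$, and Lemma \ref{lem_aux_rokhlin} yields a Rokhlin projection for $\overline{\alpha\otimes\Delta_p}$.

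For the residual upgrade, $\D^{\otimes\infty}\cong\D$ is simple and nuclear, hence exact, so Proposition \ref{Azumaya-Nakayama} identifies the ideals of $A\otimes\D^{\otimes\infty}$ with those of $A$ via $I\mapsto I\otimes\D^{\otimes\infty}$; a direct check on elementary tensors then shows $(\alpha\otimes\Delta_p)^{-1}(I\otimes\D^{\otimes\infty})=\alpha^{-1}(I)\otimes\D^{\otimes\infty}$, so invariance corresponds exactly to $\alpha$-invariance of $I$. The associated quotient is $(A/I)\otimes\D^{\otimes\infty}$ with induced endomorphism $\bar\alpha\otimes\Delta_p$, where $\bar\alpha$ is injective on $A/I$ because $\alpha^{-1}(I)=I$, and $A/I$ still absorbs $\D$ since the isomorphism $A\cong A\otimes\D$ descends to $A/I\cong(A/I)\otimes\D$ (again using Proposition \ref{Azumaya-Nakayama} to interpret the image ideal on the right). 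Consequently, the Rokhlin* argument of the previous paragraph applies verbatim to $\bar\alpha\otimes\Delta_p$, yielding the residual Rokhlin* property.

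The $K$-theory formula then follows from the K\"unneth theorem: the UCT assumption on $\D$ together with $K_1(\D)=0$ and $K_0(\D)$ torsion-free forces the Tor-term to vanish and gives $K_*(A\otimes\D^{\otimes\infty})\cong K_*(A)\otimes K_0(\D)$; functoriality combined with Lemma \ref{1.5} then produces $K_*(\alpha\otimes\Delta_p)=K_*(\alpha)\otimes K_0(\Delta_p)=K_*(\alpha)\otimes([p]\cdot\mathrm{Id})=[p]\cdot K_*(\alpha)$ under the $\D$-absorption identification $A\otimes\D^{\otimes\infty}\cong A$. The main obstacle I expect is the bookkeeping in condition (3) for non-elementary tensors: the elementary case is exact, but extending to a sum $b'=\sum a_i\otimes c_i$ requires first rearranging the tensor slots of $\D^{\otimes\infty}$ (legitimate by nuclearity) so that the common right factor $p^{\otimes n}\otimes z_{l,n}$ can be pulled out of every summand, after which the isometry of $\alpha^n\otimes\mathrm{id}$ on $A\otimes\D^{\otimes j}$ delivers the needed norm equality; the descent of $\D$-absorption to $A/I$ is a secondary but similarly routine point.
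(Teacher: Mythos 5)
Your proposal is correct and follows essentially the same route as the paper: the paper verifies conditions (1)--(3) of Lemma \ref{lem_aux_rokhlin} for the projection $\overline{1_A}\otimes q_n$ directly in the quotient $(A/I)\otimes \D^{\otimes\infty}$ for each $\alpha$-invariant $I$ (using Proposition \ref{Azumaya-Nakayama} to identify the invariant ideals and the identity $(*)$ from Proposition \ref{lem_rokhlin} for the commutation and norm estimates), which is just your ``verbatim on quotients'' step carried out explicitly. One small caveat: your side remark that $A/I$ absorbs $\D$ is not needed for the argument (only injectivity of $\alpha_{A/I}$ and the $\D^{\otimes\infty}$ tensor factor are used), and your justification of it is not quite right as stated, since Lemma \ref{A-N per O infinit} only gives $\varphi(I)\subseteq I\otimes\D$ rather than equality in general, so the isomorphism does not obviously descend; fortunately nothing in the proof depends on this.
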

\begin{proof}
Let $\alpha\in\text{End }(A)$ be  an injective endomorphism and let $\Delta_p:\D^{\otimes\infty}\longrightarrow \D^{\otimes\infty}$ be the $p$-shift endomorphism. By Proposition \ref{Azumaya-Nakayama}, all the ideals of  $A\otimes \D^{\otimes \infty}$ are of the form $I\otimes \D^{\otimes \infty}$ for ideals $I$ of $A$. Hence, an ideal $I\otimes \D^{\otimes \infty}$ is $\alpha\otimes \Delta_p$-invariant ideal if and only if  $I$ is an $\alpha$-invariant ideal of $A$. 

Given any $\alpha$-invariant ideal $I$ of $A$, let us consider the quotient $B:=(A\otimes \D^{\otimes \infty})/(I\otimes \D^{\otimes \infty})$. Observe that we can identify $B$ with $(A/I)\otimes \D^{\otimes \infty}$, as $\overline{a\otimes x}\longmapsto \overline{a}\otimes x$ for every $a\in A$ and $x\in\D^{\otimes \infty}$.  Moreover, with this identification, we have that $(\alpha\otimes\Delta_p)_B=\alpha_{A/I}\otimes \Delta_p$, and to simplify notation set $\beta:=\alpha_{A/I}\otimes \Delta_p$.

Let us consider the projection $q=(q_n)\in (\D^{\otimes \infty})^\infty$ defined in the proof of Proposition \ref{lem_rokhlin}. If $1_A$ is the unit of $A$, then we define the non-zero projection  $q'=(\beta_{n,\infty}(\overline{1_A}\otimes q_n))\in \bar{B}^\infty$, where $\bar{B}$ is the dilation of $B$ with respect to $\beta$. We claim that $q'$ is a Rokhlin projection for $\bar{\beta}$.  For this it is enough to check conditions $(1)-(3)$ of  Lemma \ref{lem_aux_rokhlin}. Since $\beta^k(\overline{1_A}\otimes q_n)=\overline{\alpha(1_A)}\otimes \Delta_p^k(q_n)$  for every $k,n\in\N$, it follows that $$\beta^k(\overline{1_A}\otimes q_n)\cdot (\overline{1_A}\otimes q_n)=\overline{\alpha(1_A)}\otimes (\Delta_p^k(q_n)\cdot q_n)=0,$$
since $\Delta_p^k(q_n)\cdot q_n =0$ for every $k\in\N$. Thus, $(1)$ is checked.

Now, given $a\in (A/I)\otimes \D^{\otimes \infty}$ and $\varepsilon>0$ there exist $r,s\in \N$ such that $b:=\sum_{i=1}^r\overline{a_i}\otimes (x_i\otimes 1_{\D^{\otimes \infty}})$ for some  $a_i\in A$ and $x_i\in \D^{\otimes s}$ and such that $\|a-b\|<\varepsilon/2$. Then, as we see in the proof of Proposition  \ref{lem_rokhlin}, given $l\in \N$ for every $n\geq s-2l$ we have that 
$$\Delta^n_p(x_i\otimes 1_{\D^{\otimes \infty}} )q_{l+n}=q_{l+n}\Delta^n_p(x_i\otimes 1_{\D^{\otimes \infty}} )=p\otimes \cdots^{(n)}\cdots \otimes p\otimes x_i\otimes z_{l,n}$$
   for every $i=1,\ldots,r$ and for some  projection $z_{l,n}\in \D^{\otimes \infty}$. Therefore, a standard argument shows that  $$\|\beta^n(a)(\overline{1_A}\otimes q_{l+n})-(\overline{1_A}\otimes q_{l+n})\beta^n(a)\|<\varepsilon$$	
for every $n\geq s-2l$, and then $\|[\beta^{n}(a),q_{l+n}]\|\longrightarrow 0$ when $n\rightarrow\infty$. Therefore it follows $(2)$.

Finally,  we have that
\begin{align*} \|\beta^n(\sum_{i=1}^r\overline{a_i}\otimes (x_i\otimes 1_{\D^{\otimes \infty}}))(\overline{1_A}\otimes q_{l+n})\| & =\|\sum_{i=1}^r\overline{\alpha^n(a_i)}\otimes (\Delta^n_p(x_i\otimes 1_{\D^{\otimes \infty}})q_{l+n}) \| \\ & =\|\sum_{i=1}^r\overline{\alpha^n(a_i)}\otimes (p\otimes \cdots^{(n)}\cdots \otimes p\otimes x_i\otimes z_{l,n})\|\,,
\end{align*}
and using the canonical isomorphism $A\otimes \D^{\otimes \infty}\cong A\otimes \D^{\otimes s} \otimes \D^{\otimes \infty}$ and the nuclearity of $\D$ it follows that
\begin{align*} \|\sum_{i=1}^r\overline{\alpha^n(a_i)}\otimes (p\otimes \cdots^{(n)}\cdots \otimes p\otimes x_i\otimes z_{l,n})\| & =\|\sum_{i=1}^r\overline{\alpha^n(a_i)}\otimes x_i\| \|p\otimes \cdots^{(n)}\cdots \otimes p\otimes z_{l,n}\| \\
& =\|\sum_{i=1}^r\overline{\alpha^n(a_i)}\otimes  x_i\|=\|\sum_{i=1}^r\overline{a_i}\otimes  x_i\|\,,
\end{align*}
for every $n\geq s-2l$. Another standard argument shows us that 
$$\|a\|-\varepsilon=\|\beta^n(a)\|-\varepsilon<\|\beta^n(a)q_{l+n}\|\,,$$
for every $n\geq s-2l$. Thus,  $\|\beta^n(a)q_{l+n}\|\longrightarrow \|a\|$ when $n\rightarrow\infty$, and then condition $(3)$ is verified. Therefore $q'$ is a Rokhlin projection for $\bar{\beta}$, and hence $\beta$ satisfies the Rokhlin* property. Since $B$ denotes $(A\otimes \D^{\otimes \infty})/(I\otimes \D^{\otimes \infty})$ for every arbitrary $\alpha$-invariant ideal $I$ of $A$, we have that $\alpha\otimes \Delta_p$ satisfies the residual Rokhlin* property. 

Finally if $\D$ satisfies the UCT then $K_1(\D)=0$  and $K_0(\D)$ is torsion-free, and hence by the K\"unneth formulas we have that $K_*(A\otimes \D^{\otimes \infty})\cong K_*(A)\otimes K_*(\D^{\otimes \infty})$ and $K_*(\alpha\otimes \Delta_p)=K_*(\alpha)\otimes K_*(\Delta_p)$, so $K_*(\alpha\otimes \Delta_p)=[p]\cdot K_*(\alpha)$ by Lemma \ref{1.5}.

\end{proof}

Notice that if $A$ absorbs $\D$, then by \cite[Theorem 2.3]{TW} any isomorphism $\varphi:A\longmapsto A\otimes\D$  is approximately unitary equivalent to  $\text{Id}_A\otimes 1_\D$.

\begin{lem}\label{A-N per O infinit}
If $\D$ is a strongly self-absorbing $C^*$-algebra, $A$ is a $C^*$-algebra that absorbs $\D$, and $\varphi : A\rightarrow A\otimes \D$ is an isomorphism, then for any $I\lhd A$ we have that $\varphi (I)\subseteq I\otimes \D$. Moreover, if $A$ has finitely many ideals, then $\varphi (I)= I\otimes \D$.
\end{lem}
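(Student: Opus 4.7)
The plan is to combine the approximate unitary equivalence of $\varphi$ with $\text{Id}_A\otimes 1_\D$ coming from \cite[Theorem 2.3]{TW} (quoted just before the statement) with the ideal-correspondence given by Proposition~\ref{Azumaya-Nakayama}. For the first inclusion, write $\varphi(a)=\lim_n u_n(a\otimes 1_\D)u_n^*$ where $(u_n)$ is a sequence of unitaries in (the unitization or multiplier algebra of) $A\otimes\D$. If $a\in I$ then $a\otimes 1_\D\in I\otimes\D$, and since $I\otimes\D$ is a closed two-sided ideal of $A\otimes\D$ it is invariant under conjugation by unitaries; passing to the limit yields $\varphi(a)\in I\otimes\D$, whence $\varphi(I)\subseteq I\otimes\D$.

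For the equality under the additional hypothesis I would exploit Proposition~\ref{Azumaya-Nakayama}. Since $\D$ is strongly self-absorbing it is nuclear (hence exact) and simple, so the proposition (applied with $\D$ in the role of the simple factor and $A$ in the role of $B$) provides a bijection $J\mapsto J\otimes\D$ between $\mathcal{I}(A)$ and $\mathcal{I}(A\otimes\D)$. In particular, for each $I\in\mathcal{I}(A)$ there exists a unique $f(I)\in\mathcal{I}(A)$ with $\varphi(I)=f(I)\otimes\D$, and the first part of the lemma forces $f(I)\subseteq I$. Moreover, because $\varphi$ is an isomorphism and $J\mapsto J\otimes\D$ is itself bijective, the composite $f:\mathcal{I}(A)\to\mathcal{I}(A)$ is a bijection.

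When $\mathcal{I}(A)$ is finite the conclusion $f=\text{Id}$ follows from a short orbit argument: for any $I\in\mathcal{I}(A)$ one has a decreasing chain $I\supseteq f(I)\supseteq f^2(I)\supseteq\cdots$ which must stabilize, but since $f$ is a bijection on a finite set, every orbit is periodic, so $f^k(I)=I$ for some $k\geq 1$; this forces all the intermediate inclusions to be equalities, and in particular $f(I)=I$. Therefore $\varphi(I)=I\otimes\D$, as desired.

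The only delicate point in the argument is making sure that the unitaries implementing the approximate equivalence of $\varphi$ with $\text{Id}_A\otimes 1_\D$ can be chosen in a way that still leaves $I\otimes\D$ invariant under conjugation. This is automatic once the $u_n$ are taken in $(A\otimes\D)^{\sim}$ or in $M(A\otimes\D)$, which is the natural setting of the Toms--Winter result; beyond this, the whole proof reduces to the bookkeeping with ideals outlined above.
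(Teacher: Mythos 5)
Your argument is correct and follows essentially the same route as the paper: the inclusion $\varphi(I)\subseteq I\otimes\D$ via the Toms--Winter approximate unitary equivalence and the closedness of the ideal, then Proposition~\ref{Azumaya-Nakayama} (with the simple, exact algebra $\D$ as the tensor factor) to produce a monotone-decreasing bijection on the finite set $\mathcal{I}(A)$ that must be the identity. The only difference is cosmetic: you finish by observing that a bijection of a finite set has periodic orbits, so the chain $I\supseteq f(I)\supseteq f^2(I)\supseteq\cdots$ returns to $I$ and collapses, whereas the paper runs a maximal-counterexample argument; both are valid.
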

\begin{proof}
By \cite[Theorem 2.3]{TW}, $\varphi$ is approximately unitary equivalent to $\text{Id}_A\otimes 1_{\D}$. Now, as
$$(\text{Id}_A\otimes 1_{\D})(I)\subseteq I\otimes \D,$$
given any unitary $u\in M(A\otimes \D)$ we have $u(\text{Id}_A\otimes 1_{\D})(I)u^*\subseteq I\otimes \D$. Hence, $\varphi (I)\subseteq I\otimes \D$.

For the last statement, notice that by the previous statement and Proposition \ref{Azumaya-Nakayama}, for each $I\lhd A$ there exists a unique $J_I\lhd A$ such that 
$$\varphi (I)=J_I\otimes \D \subseteq I\otimes \D,$$
and thus $J_I\subseteq I$ again by Proposition \ref{Azumaya-Nakayama}. Hence, there is a monotone decreasing bijection $(-)_I$ from the set of ideals of $A$ to itself. Notice that $0$ and $A$ are fixed by $(-)_I$. If $(-)_I$ is not the identity map, as the set of ideals is finite, there exists $K\lhd A$ different from $0$ and $A$ maximal for the property $K_I\ne K$. By the above argument there exists a unique $L\lhd A$ such that $L_I=K$, and notice that $K\subseteq L$. Assuming $K\ne L$ will contradict the maximality of $K$. So, $K=L$, but then
$K=L_I=K_I\subsetneq K$, which is impossible. So we are done.
\end{proof}

\begin{lem}\label{prop_Rokhlin} Let $B,C$ be $C^*$-algebras, let $\beta:B\longrightarrow B$ be an  injective endomorphism, and let $\delta:C\longrightarrow B$ be an isomorphism. If $\beta$  satisfies the residual Rokhlin* property, then so does $\delta^{-1}\circ\beta\circ \delta$.
\end{lem}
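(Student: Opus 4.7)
The plan is to show that the dilations $\bar{\alpha}$ and $\bar{\beta}$ (where $\alpha:=\delta^{-1}\circ\beta\circ\delta$) are themselves conjugate via an isomorphism naturally induced by $\delta$, and then to observe that all the data defining the residual Rokhlin* property transport along such a conjugation.

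First I would build an isomorphism $\bar{\delta}:\bar{C}\longrightarrow \bar{B}$ between the dilations that intertwines $\bar{\alpha}$ and $\bar{\beta}$. The key identity is $\delta\circ\alpha=\delta\circ(\delta^{-1}\circ\beta\circ\delta)=\beta\circ\delta$, which means that the constant family $\delta_i:=\delta$ defines a morphism between the inductive systems $\{C,\alpha\}$ and $\{B,\beta\}$ used to construct $\bar{C}$ and $\bar{B}$. By the universal property of the inductive limit this yields a $*$-homomorphism $\bar{\delta}:\bar{C}\to\bar{B}$; since each $\delta_i$ is an isomorphism, so is $\bar{\delta}$, and passing the relation $\beta\circ\delta=\delta\circ\alpha$ through the limit gives $\bar{\beta}\circ\bar{\delta}=\bar{\delta}\circ\bar{\alpha}$, i.e., $\bar{\alpha}=\bar{\delta}^{-1}\circ\bar{\beta}\circ\bar{\delta}$.

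Next I would transfer the invariant-ideal structure. The map $I\longmapsto \bar{\delta}(I)$ is a bijection $\mathcal{I}(\bar{C})\to\mathcal{I}(\bar{B})$, and because $\bar{\alpha}=\bar{\delta}^{-1}\bar{\beta}\bar{\delta}$ it restricts to a bijection $\mathcal{I}^{\bar{\alpha}}(\bar{C})\to\mathcal{I}^{\bar{\beta}}(\bar{B})$. For any $I\in \mathcal{I}^{\bar{\alpha}}(\bar{C})$, setting $J:=\bar{\delta}(I)$, the isomorphism $\bar{\delta}$ descends to an isomorphism $\widehat{\delta}:\bar{C}/I\longrightarrow \bar{B}/J$ satisfying $\widehat{\delta}\circ\bar{\alpha}_{\bar{C}/I}=\bar{\beta}_{\bar{B}/J}\circ\widehat{\delta}$.

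Finally, the Rokhlin* property is visibly preserved under such a conjugation. The isomorphism $\widehat{\delta}$ extends coordinatewise to an isomorphism $(\bar{C}/I)^\infty\longrightarrow (\bar{B}/J)^\infty$ (since both $l^\infty$ and $c_0$ are functorial), and therefore to an isomorphism of the enveloping von Neumann algebras $((\bar{C}/I)^\infty)^{**}\longrightarrow ((\bar{B}/J)^\infty)^{**}$ that intertwines the normal extensions of $\bar{\alpha}_{\bar{C}/I}$ and $\bar{\beta}_{\bar{B}/J}$. If $p\in ((\bar{B}/J)^\infty)^{**}\cap (\bar{B}/J)'$ is a Rokhlin projection for $\bar{\beta}_{\bar{B}/J}$, then $\widehat{\delta}^{-1}(p)$ is a projection in $((\bar{C}/I)^\infty)^{**}\cap (\bar{C}/I)'$, is annihilated under powers of $\bar{\alpha}_{\bar{C}/I}$ exactly as $p$ was under $\bar{\beta}_{\bar{B}/J}$, and cannot be annihilated (against any nonzero element of $\bar{C}/I$) by some shift. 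Since this works for every $\bar{\alpha}$-invariant $I$, the dilation $\bar{\alpha}$ has the residual Rokhlin* property, which by Definition~\ref{lem_aux_rokhlin}~(or rather the analogous definition preceding it) means $\alpha=\delta^{-1}\circ\beta\circ\delta$ does as well. The main ``obstacle'' is really just bookkeeping: once one verifies that the inductive-limit construction is natural in $\delta$, everything else is transport of structure along an isomorphism.
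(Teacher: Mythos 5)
Your proposal is correct and follows essentially the same route as the paper: you show the dilations are conjugate via the induced isomorphism $\bar{\delta}$, match up the invariant ideals, and transport the Rokhlin projection through the induced isomorphism on $(\,\cdot\,^\infty)^{**}$. The paper's own proof is terser but makes exactly these three observations, so there is nothing to add beyond fixing your stray reference to ``Definition~\ref{lem_aux_rokhlin}''.
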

\begin{proof} First define $\alpha:=\delta^{-1}\circ\beta\circ \delta$, so it is easy to check that the dilated automorphism $\bar{\alpha}=\overline{\delta^{-1}\circ\beta\circ \delta}$ of $\bar{C}$ is the composition $\bar{\delta}^{-1}\circ\bar{\beta}\circ \bar{\delta}$, where $\bar{\delta}$ is the induced isomorphism between $\bar{C}$ and $\bar{B}$. Observe also that  $\mathcal{I}^{\alpha}(C)=\{\delta^{-1}(I):I\in \mathcal{I}^\beta(B)\}$, and then given any $I\in \mathcal{I}^\beta(B)$ we have that $\alpha_{A/\delta^{-1}(I)}=\delta^{-1}_{C/\delta^{-1}(I)}\circ\beta_{B/I}\circ \delta_{C/\delta^{-1}(I)}$

Finally, let $p\in (\bar{B}^\infty)^{**}\cap \bar{B}'$ be a Rokhlin  projection of $\beta$, then since $\delta$ induces an isomorphism $\delta:(\bar{C}^\infty)^{**}\longrightarrow (\bar{B}^\infty)^{**}$ we have that $\delta^{-1}(p)$ is a Rokhlin projection for $\alpha$.
\end{proof}

Recall that if $\D$ is  a strongly self-absorbing $C^*$-algebra in the UCT class and $A$ is a $C^*$-algebra that absorbs $\D$, then the isomorphism  $\varphi:A \longrightarrow  A\otimes \D$  induces the isomorphism of $K$-theory $K_*(\varphi):K_*(A)\longrightarrow K_*(A)\otimes K_0(\D)$ given by $x\longmapsto x\otimes [1_{\D}]$ for every $x\in K_*(A)$. We define $[q]\cdot x:=K_*(\varphi)^{-1}(x\otimes [q])$ for every projection $q\in \D$.

\begin{corol}\label{corol_rokhlin} Let $\D$ be a strongly self-absorbing $C^*$-algebra with a non-trivial projection $p$, and let $A$ be a separable unital  $C^*$-algebra with finitely many ideals such that $A$ absorbs $\D$. Then, given any injective endomorphism $\alpha\in \text{End }(A)$ there exists an  endomorphism $\beta\in \text{End }(A)$ such that $\mathcal{I}^{\alpha}(A)=\mathcal{I}^{\beta}(A)$  and satisfies the residual Rokhlin* property. Moreover, if $\D$ satisfies the UCT then $K_*(\beta)=[p]\cdot K_*(\alpha)$.
\end{corol}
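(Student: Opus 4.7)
My plan is to build $\beta$ by conjugating $\alpha\otimes \Delta_p$ through an isomorphism $\varphi:A\longrightarrow A\otimes \D^{\otimes \infty}$, which exists because $A$ absorbs $\D$ and $\D\cong \D^{\otimes \infty}$. Concretely, set $\beta:=\varphi^{-1}\circ(\alpha\otimes \Delta_p)\circ \varphi$; this is injective since $\alpha\otimes \Delta_p$ is, and all three conclusions of the corollary will be pulled back along $\varphi$ from the corresponding statements about $\alpha\otimes\Delta_p$, which have already been established.

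First, Theorem \ref{rokhlin_ideals} guarantees that $\alpha\otimes \Delta_p$ satisfies the residual Rokhlin* property, and Lemma \ref{prop_Rokhlin} applied with $\delta=\varphi$ transfers this property directly to $\beta$. Next, to match ideals, the finite-ideal hypothesis on $A$ is precisely what allows Lemma \ref{A-N per O infinit} to give $\varphi(I)=I\otimes \D^{\otimes \infty}$ for every $I\lhd A$: the lemma is stated for a single copy of $\D$, but $\D^{\otimes \infty}\cong \D$ is itself strongly self-absorbing so the same proof applies verbatim. By Proposition \ref{Azumaya-Nakayama} every ideal of $A\otimes \D^{\otimes \infty}$ has the form $I\otimes \D^{\otimes \infty}$, and as noted in the proof of Theorem \ref{rokhlin_ideals} such an ideal is $(\alpha\otimes \Delta_p)$-invariant if and only if $I$ is $\alpha$-invariant; combining these yields $\mathcal{I}^{\beta}(A)=\mathcal{I}^{\alpha}(A)$.

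For the $K$-theory claim under the UCT, I will unwind the conjugation at the level of $K_*$. The K\"unneth formula identifies $K_*(A\otimes \D^{\otimes \infty})\cong K_*(A)\otimes K_0(\D)$, with $K_*(\varphi)$ realized by $x\mapsto x\otimes [1_\D]$; Theorem \ref{rokhlin_ideals} gives $K_*(\alpha\otimes \Delta_p)=K_*(\alpha)\otimes K_0(\Delta_p)$, and Lemma \ref{1.5} evaluates $K_0(\Delta_p)$ as multiplication by $[p]$. Chasing $x\in K_*(A)$ through $K_*(\varphi)^{-1}\circ K_*(\alpha\otimes \Delta_p)\circ K_*(\varphi)$ produces $K_*(\varphi)^{-1}(K_*(\alpha)(x)\otimes [p])$, which equals $[p]\cdot K_*(\alpha)(x)$ by the definition recalled just before the corollary.

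The step I expect to require the most care is the ideal matching, since Lemma \ref{A-N per O infinit} is phrased for isomorphisms $A\to A\otimes \D$ whereas the isomorphism I actually use targets $A\otimes \D^{\otimes \infty}$. The finiteness-of-ideals hypothesis is needed precisely to upgrade the generic inclusion $\varphi(I)\subseteq I\otimes \D^{\otimes \infty}$ to an equality; without it, $\beta$-invariant ideals would not obviously coincide with $\alpha$-invariant ones, and the whole reduction strategy would lose its grip on the ideal lattice.
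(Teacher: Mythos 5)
Your proposal is correct and follows essentially the same route as the paper: define $\beta=\varphi^{-1}\circ(\alpha\otimes\Delta_p)\circ\varphi$, invoke Theorem \ref{rokhlin_ideals} and Lemma \ref{prop_Rokhlin} for the residual Rokhlin* property, use Lemma \ref{A-N per O infinit} (together with Proposition \ref{Azumaya-Nakayama}) and the finiteness of the ideal lattice to identify $\mathcal{I}^{\beta}(A)$ with $\mathcal{I}^{\alpha}(A)$, and chase $K_*(\varphi)^{-1}\circ K_*(\alpha\otimes\Delta_p)\circ K_*(\varphi)$ through the K\"unneth identification. Your explicit remark that Lemma \ref{A-N per O infinit} applies with $\D^{\otimes\infty}$ in place of $\D$ because $\D^{\otimes\infty}\cong\D$ is strongly self-absorbing is a point the paper leaves implicit, but it is not a different argument.
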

\begin{proof} We define $\beta:=\varphi^{-1}\circ (\alpha\otimes \Delta_p)\circ \varphi$ where $\varphi$ is any isomorphism $A\cong A\otimes \D^{\otimes \infty}$. First observe that that by Lemma \ref{A-N per O infinit} we have that $\mathcal{I}^{\beta}(A)=\{\varphi^{-1}(I\otimes \D^{\otimes \infty} ):I\in \mathcal{I}^\alpha(A)\}=\mathcal{I}^{\alpha}(A)$. Now by Theorem \ref{rokhlin_ideals} and Lemma \ref{prop_Rokhlin} we have that $\beta$ has the residual Rokhlin* property.

If $\D$ satisfies the UCT we have that $K_*(\beta)=K_*(\varphi^{-1})\circ K_*(\alpha\otimes \Delta_p)\circ K_*(\varphi)$. Therefore, 
\begin{align*}K_*(\beta)(x) & =K_*(\varphi^{-1})\circ K_*(\alpha\otimes \Delta_p)\circ K_*(\varphi)(x) \\ & =K_*(\varphi^{-1})\circ K_*(\alpha\otimes \Delta_p)(x\otimes [1_{\D^{\otimes \infty}}]) \\
& = K_*(\varphi^{-1})(K_*(\alpha)(x)\otimes [p\otimes 1_{\D^{\otimes \infty}}])=[p\otimes 1_{\D^{\otimes \infty}}]\cdot K_*(\alpha)(x)
\end{align*}
for every $x\in K_*(A)$. Finally, since  the map $[q]\longmapsto [q\otimes 1_{\D^{\otimes \infty}}]$ for $[q]\in K_0(\D)$ induces an isomorphism between $K_0(\D)$ and $K_0(\D^{\otimes\infty})$ we have that $[p\otimes 1_{\D^{\otimes \infty}}]\cdot K_*(\alpha)(x)=[p]\cdot K_*(\alpha)(x)$ for every $x\in K_*(A)$.
\end{proof}

\section{Purely infinite crossed products}

Given a unital $C^*$-algebra $A$ and an injective endomorphism $\alpha:A\longrightarrow A$ Stacey \cite{Stacey}, following the ideas of Cuntz \cite{Cu} and Paschke \cite{Pa}, defined the crossed product $A\times_\alpha\N$ as the universal $C^*$-algebra generated by $A$ and an  isometry $S_\infty$ such that $S_\infty a S_\infty^*=\alpha(a)$ for every $a\in A$. It was also shown that $A\times_\alpha\N\cong \alpha_{1,\infty}(1_A)(\bar{A}\times_{\bar{\alpha}}\Z)\alpha_{1,\infty}(1_A)$ and that  $\alpha_{1,\infty}(1_A)$ is a full projection. Therefore $A\times_\alpha\N$ and $\bar{A}\times_{\bar{\alpha}}\Z$ are strongly Morita equivalent. So, to study properties like purely infiniteness, simplicity or the ideal structure of $A\times_\alpha \N$, it is enough to look at the crossed product $\bar{A}\times_{\bar{\alpha}}\Z$. 

From now on, we will identify $A\times_\alpha\N$ with the corresponding isomorphic corner sub-$C^*$-algebra of  $\bar{A}\times_{\bar{\alpha}}\Z$, while $A$ is identified with $\alpha_{1,\infty}(A)$ and the generating isometry $S_\infty$ of $A\times_\alpha\N$ with the compression $\alpha_{1,\infty}(1)U_\infty \alpha_{1,\infty}(1)$ of the generating unitary $U_\infty$ of $\bar{A}\times_{\bar{\alpha}}\Z$.

There is a bijection $\lambda:\mathcal{I}^\alpha(A)\longrightarrow\mathcal{I}^{\bar{\alpha}}(\bar{A})$ given by $I\longmapsto \sum_{i=1}^\infty\alpha_{i,\infty}(I)$,  with inverse $\lambda^{-1}(J)=\{a\in A: \alpha_{1,\infty}(a)\in J\}$ for every $J\in \mathcal{I}^{\bar{\alpha}}(\bar{A})$, that makes commutative the diagram 
\[
{
\def\labelstyle{\displaystyle}
 \xymatrix{ {\mathcal{I}(A\times_\alpha \N)}\ar[r]^{\Phi}\ar[d]^{ \mu}  & {\mathcal{I}^\alpha(A)}\ar[d]^{\lambda} \\ {\mathcal{I}(\bar{A}\times_{\bar{\alpha}}\Z)} \ar[r]^{\Phi}  & \mathcal{I}^{\bar{\alpha}}(\bar{A})   }
}\]
where $\mu$ is the bijection induced by the Morita equivalence.  Hence, $\Phi:\mathcal{I}(\bar{A}\times_{\bar{\alpha}}\Z)\longrightarrow \mathcal{I}^{\bar{\alpha}}(\bar{A})$ is a bijection if and only if so is $\Phi:\mathcal{I}(A\times_{\alpha}\N)\longrightarrow \mathcal{I}^{\alpha}(A)$. Thus,  there is no ambiguity in saying that $A$ separates ideals of $A\times_\alpha\N$ exactly when $\bar{A}$ separates ideals of $\bar{A}\times_{\bar{\alpha}}\Z$. Now, given $I\in \mathcal{I}^{\alpha}(A)$  we have that  $\alpha$ restricts to an endomorphism of $I$, and therefore we can identify $\sum_{i=1}^\infty\alpha_{i,\infty}(I)$ with $\bar{I}$, the dilation of $I$ by $\alpha_{|I}$.  Since $\bar{I}$ is an $\bar{\alpha}$-invariant ideal of $\bar{A}$, we can identify $\bar{I}\times_{\bar{\alpha_{|I}}} \Z$ with an ideal of $\bar{A}\times_{\bar{\alpha}} \Z$. Moreover,  $\bar{A}/\bar{I}$ is isomorphic to $\overline{A/I}$, the dilation of $A/I$ by $\alpha_{A/I}$, where $\alpha_{A/I}$ is the natural endomorphism induced by the quotient. Thus, given $I\in \mathcal{I}^{\alpha}(A)$ we have the following short exact sequence: 
$$0\longrightarrow \bar{I}\times_{\bar{\alpha_{|I}}} \Z\longrightarrow\bar{A}\times_{\bar{\alpha}} \Z\longrightarrow \overline{A/I}\times_{\overline{\alpha_{A/I}}} \Z\longrightarrow 0\,.$$

Given $I\in \mathcal{I}^{\alpha}(A)$,  the ideal $\langle I\rangle$ of $A\times_\alpha\N$ generated by $I$ is not necessarily isomorphic to $I\times_{\alpha_{|I}} \N$, as it is defined for the non-unital case, but it is isomorphic to $\alpha_{1,\infty}(1_A) (\bar{I}\times_{\bar{\alpha_{|I}}}\Z)\alpha_{1,\infty}(1_A)$. Indeed,  to simplify notation let us denote $B:=\bar{A}\times_{\bar{\alpha}}\Z$ and $p:=\alpha_{1,\infty}(1_A)\in B$, so that by the above identification we have that $pBp:=A\times_\alpha\N$. From the observation that $U^*_\infty \alpha_{1,n}(y)U_\infty=\alpha_{1,n+1}(y)$ for every $n\in \N$ and $y\in I$, it follows that $BIB=B\bar{I}B$, and hence 
$$\langle I\rangle=pBpIpBp=pBIBp=pB\bar{I}Bp\,.$$
But $B\bar{I}B$ is $\bar{I}\times_{\bar{\alpha_{|I}}}\Z$, so the claim is proved. Therefore we have that  $\langle I\rangle$ is strongly Morita equivalent to $\bar{I}\times_{\bar{\alpha}}\Z$, so $K_*(\langle I\rangle)\cong K_*(\bar{I}\times_{\bar{\alpha_{|I}}}\Z)$, and we can use the Pimsner-Voiculescu six-terms exact sequence, together with the continuity of the $K$-theory, to compute the $K$-groups of the ideals of the form $\langle I\rangle$ for $I\in \mathcal{I}^{\alpha}(A)$. Moreover, given $I,J\in \mathcal{I}^{\alpha}(A)$ with $I\subseteq J$, by the previous argument we have that 
$$\langle J\rangle/\langle I\rangle=\alpha_{1,\infty}(1) (\overline{J/I}\times_{\overline{\alpha_{J/I}}}\Z)\alpha_{1,\infty}(1)\,.$$

\begin{rema} We would like to remark that our definition of invariant ideal slightly differs from the one given by Adji in \cite{Adji} for two reasons. First, because we only are interested in actions by injective endomorphisms. And second, because we are not interested  for a characterization of the gauge invariant ideals as another crossed product.

\end{rema}

Our goal in this section is to give conditions for the crossed product $A\times_\alpha\N$ being purely infinite. 

Notice that, if $A$ is a separable purely infinite $C^*$-algebra of real rank zero and $\alpha\in \text{End }(A)$ is such that it satisfies the residual Rokhlin* property, then the dilation $\bar{A}$ is a separable purely infinite $C^*$-algebra of real rank zero and  the action of $\Z$ induced by $\bar{\alpha}$ on $\widehat{(\bar{A})}$ is essentially free. Hence, $\bar{A}\times_{\bar{\alpha}}\Z$ is a purely infinite $C^*$-algebra \cite[Theorem 3.3]{RS}, and thus so is $A\times_\alpha \N$. 

Another condition we will consider is related to endomorphisms on $C^*$-algebras of real rank zero that acts in  a special way at the level of their monoid of projections. Given a $C^*$-algebra $A$, let $V(A)$ be the monoid of Murray-von Neumann equivalence classes of projections of $M_\infty(A)$. We briefly recall its construction below.

We say that two projections $p$ and $q$ in a $C^*$-algebra $A$ are Murray-von Neumann \emph{equivalent} if there is a partial isometry $v$ such that $p=vv^*$ and $v^*v=q$. One can extend this relation to the set $\mathcal{P}(M_{\infty}(A))$ of projections in $M_{\infty}(A)$, and thereby construct
\[
V(A)=\mathcal{P}(M_{\infty}(A))/\!\!\sim\,,
\]
where $[p]\in V(A)$ stands for the equivalence class that contains the projection $p$ in $M_{\infty}(A)$.
This set becomes an abelian monoid when endowed with the operation $[p]+[q]=[p\oplus q]$, where $p\oplus q$ refers to the matrix $\left(\begin{smallmatrix} p & 0 \\ 0 & q\end{smallmatrix}\right)$. Moreover, if $I$ is a closed two sided ideal of $A$, then $V(I)$ is an ideal of $V(A)$. If $A$ has real rank zero, then all the ideals of $V(A)$ are of the form $V(I)$ for some ideal $I$ of $A$, and moreover $V(A)/V(I)\cong V(A/I)$. Now, given any $C^*$-algebra endomorphism $\alpha:A\longrightarrow A$, we can extend it to a endomorphism $\alpha:M_{\infty}(A)\longrightarrow M_\infty(A)$ by  $a\otimes k\mapsto \alpha(a)\otimes k$ for every $a\in A$ and $k\in M_{\infty}(\C)$. Hence, it induces a map $\alpha^*:V(A)\longrightarrow V(A)$ by the rule $[p]\longmapsto [\alpha(p)]$ for every projection $p\in M_\infty(A)$.
 
\begin{defi} Let $A$ be a $C^*$-algebra and let $\alpha\in \text{End }(A)$. Then we say that $\alpha$ \emph{contracts projections of $A$} if given $x\in V(A)$ there exists $n\in \N$ such that $\alpha^{*n}(x)<x$.  
\end{defi}
\begin{exem}
Given a $C^*$-algebra $A$ and a strongly self-absorbing $C^*$-algebra $\D$ with a non-trivial projection $p$, we have that $A\otimes \D^{\otimes \infty}=\overline{\bigcup_{n=1}^\infty A_n}$ where $A_n:=A\otimes\D^{\otimes n}\otimes 1_{\D^{\otimes \infty}}$. Therefore given any $q\in A\otimes \D^{\otimes \infty}$ there exists $q'\in A\otimes\D^{\otimes n}$ for some $n\in \N$ such that $q\sim q'\otimes 1_{\D^{\otimes \infty}}$. Since $\D$ is strongly self-absorbing, the flip $\sigma:\D\otimes \D\longrightarrow \D\otimes \D$ is approximately unitary equivalent to $\text{Id}_{\D\otimes \D}$. Hence, if we write $q'=\sum_i a_i\otimes q_i'$ where $a_i\in A$ and $q_i\in \D^{\otimes n}$, then there exists a unitary $u\in \D^{\otimes n+1}$ such that 
$$\|(1_A\otimes u\otimes 1_{\D^{\otimes \infty}})(\sum_i a_i\otimes p\otimes q_i'\otimes 1_{\D^{\otimes \infty}})(1_A\otimes u^*\otimes 1_{\D^{\otimes \infty}})-\sum_i a_i\otimes q_i'\otimes p\otimes 1_{\D^{\otimes \infty}}\|<1/2\,.$$
Thus, since $p$ is a non-trivial projection of $\D$ we have that
 \begin{align*} (\text{Id}_A\otimes\Delta_p)^*[q]& =(\text{Id}_A\otimes\Delta_p)^*[q'\otimes 1_{\D^{\otimes \infty}}] = [(\text{Id}_A\otimes\Delta_p)(q'\otimes 1_{\D^{\otimes \infty}})] \\ 
 & = [\sum_i a_i\otimes p\otimes q_i'\otimes 1_{\D^{\otimes \infty}}] =[\sum_i a_i\otimes q_i'\otimes p\otimes 1_{\D^{\otimes \infty}}] \\ & < [\sum_i a_i\otimes q_i'\otimes 1_{\D^{\otimes \infty}}]=[q'\otimes 1_{\D^{\otimes \infty}}]=[q]\,.
 \end{align*}
 Therefore, $\text{Id}_A\otimes\Delta_p$ contracts projections of $A\otimes \D^{\otimes \infty}$. Moreover, given any ideal $I$ of $A$ we have that $\alpha_{A\otimes \D^{\otimes \infty}/I\otimes \D^{\otimes \infty}}$ contracts projections of $A\otimes \D^{\otimes \infty}/I\otimes \D^{\otimes \infty}$.
\end{exem}

\begin{theor}\label{theor_purely_inf} 
Let $A$ be a unital separable  $C^*$-algebra of real rank zero, let $\alpha\in \text{End }(A)$ be an injective endomorphism such that:
\begin{enumerate}
\item It satisfies the residual Rokhlin* property.
\item Given any ideal $I\in \mathcal{I}^{\alpha}(A)$  we have that $\alpha_{A/I}$ contracts projections of $A/I$.
\end{enumerate}
Then $A\times_{\alpha}\N$ is a purely infinite $C^*$-algebra.  
\end{theor}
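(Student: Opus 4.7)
The plan is to exploit the strong Morita equivalence $A\times_\alpha\N\sim\bar{A}\times_{\bar{\alpha}}\Z$ via the full corner $\alpha_{1,\infty}(1_A)$ recalled at the start of this section. Since pure infiniteness in the sense of Kirchberg--R{\o}rdam is preserved under strong Morita equivalence among separable $C^*$-algebras, it suffices to prove that $\bar{A}\times_{\bar{\alpha}}\Z$ is purely infinite. First I would note that $\bar{A}$ is separable and of real rank zero, the latter since real rank zero passes to inductive limits along injective connecting maps. The residual Rokhlin* hypothesis on $\alpha$ is by definition residual Rokhlin* for the automorphism $\bar{\alpha}$; Sierakowski's theorem from Section~1 then yields a bijection between ideals of $\bar{A}\times_{\bar{\alpha}}\Z$ and $\bar{\alpha}$-invariant ideals of $\bar{A}$, which via $\lambda$ correspond to $\alpha$-invariant ideals of $A$. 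In particular, every nonzero proper quotient is isomorphic to $\overline{A/I}\times_{\overline{\alpha_{A/I}}}\Z$ for some $\alpha$-invariant $I\subsetneq A$.

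Next I would invoke the Pasnicu--R{\o}rdam criterion from \cite{PR}: a $C^*$-algebra of real rank zero with the ideal property is purely infinite if and only if every nonzero quotient contains an infinite projection. The ideal property follows from real rank zero, and real rank zero of the crossed product should follow from real rank zero of $\bar{A}$ together with the Rokhlin-type property of $\bar{\alpha}$ in the style of Osaka--Phillips. Hence one is reduced to exhibiting an infinite projection in each proper quotient $\overline{A/I}\times_{\overline{\alpha_{A/I}}}\Z$.

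To produce such a projection, fix a nonzero projection $\bar{p}\in A/I$, which exists by real rank zero and unitality of $A/I$. By hypothesis~(2) there is some $m\in\N$ with $[\alpha_{A/I}^m(\bar{p})]<[\bar{p}]$ in $V(A/I)$, that is $[\bar{p}]=[\alpha_{A/I}^m(\bar{p})]+[\bar{q}]$ with $[\bar{q}]\neq 0$. Using $\alpha_{1,\infty}(\bar{p})=\alpha_{m+1,\infty}(\alpha_{A/I}^m(\bar{p}))$, pushing the equation forward along the injective map $\alpha_{m+1,\infty}$ yields
$$[\alpha_{m+1,\infty}(\bar{p})]=[\alpha_{1,\infty}(\bar{p})]+[\alpha_{m+1,\infty}(\bar{q})]\quad\text{in } V(\overline{A/I}).$$
In the crossed product, the canonical unitary implementing $\overline{\alpha_{A/I}}$ conjugates $\alpha_{m+1,\infty}(\bar{p})$ to $\alpha_{1,\infty}(\bar{p})$, so these two projections coincide in $V(\overline{A/I}\times_{\overline{\alpha_{A/I}}}\Z)$. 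Substituting gives $[\alpha_{1,\infty}(\bar{p})]=[\alpha_{1,\infty}(\bar{p})]+[\alpha_{m+1,\infty}(\bar{q})]$ with the second summand nonzero, so $\alpha_{1,\infty}(\bar{p})$ is an infinite projection in the quotient, as required. The main technical obstacle will be the verification of real rank zero (and hence the ideal property) for $\bar{A}\times_{\bar{\alpha}}\Z$; should this not follow cleanly from the residual Rokhlin* hypothesis alone, one would instead use a variant of Pasnicu--R{\o}rdam that only requires infinite projections in every nonzero ideal, which can still be produced from the contracting hypothesis through the same computation.
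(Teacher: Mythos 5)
Your overall strategy (Morita reduction, quotients indexed by $\alpha$-invariant ideals, and manufacturing an infinite projection from the contraction hypothesis) matches the paper, and your $V(\cdot)$-computation showing that $\alpha_{1,\infty}(\bar p)$ becomes infinite in the crossed product is essentially the monoid version of the explicit partial-isometry computation ($s=t\bar\alpha^n(p)U^np$) in the paper's proof. However, the criterion you hang everything on is not correct, and this is a genuine gap. Pure infiniteness in the Kirchberg--R{\o}rdam sense is \emph{not} implied by ``real rank zero $+$ ideal property $+$ an infinite projection in every nonzero quotient'': an algebra can satisfy all of that and still contain a nonzero \emph{finite} projection $e$, in which case the hereditary subalgebra $e A e$ witnesses the failure of pure infiniteness. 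What \cite[Proposition 2.11]{PR} actually requires --- and what the paper verifies --- is an infinite projection in \emph{every nonzero hereditary sub-$C^*$-algebra} of every quotient. Producing one infinite projection per quotient does not address this, and your fallback (``infinite projections in every nonzero ideal'') suffers from the same defect: it is hereditary subalgebras, not ideals or quotients, that must be reached.

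Closing that gap is exactly where the dynamical hypothesis is used a second time, beyond separating ideals. The paper takes an arbitrary positive $x$ in (a quotient of) $\bar A\times_{\bar\alpha}\Z$ and invokes \cite[Lemma 3.2]{RS} --- a consequence of essential freeness, hence of the residual Rokhlin* property --- to find $a\in\bar A_+$ with $a\lesssim x$; real rank zero of $\bar A$ then yields a projection $p\in\bar A$ with $p\lesssim x$; the contraction hypothesis makes \emph{this} $p$ (an arbitrary projection of $\bar A$, not one chosen for convenience) infinite in the crossed product; and \cite[Proposition 2.6]{KR} transfers an equivalent copy of $p$ into $\overline{x(\bar A\times_{\bar\alpha}\Z)x}$. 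Note also that the paper never needs, and never proves, real rank zero of the crossed product; your hope that this would follow ``in the style of Osaka--Phillips'' is unfounded, since the Rokhlin* property is far weaker than the projection-tower Rokhlin property those results require, and you correctly identified this as the weak point of your route. The fix is not a better real-rank-zero theorem but the hereditary-subalgebra localization argument above.
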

\begin{proof}
It is enough to check that  $\bar{A}\times_{\bar{\alpha}} \Z$ is purely infinite, since it is strongly Morita equivalent to  $A\times_{\alpha}\N$. 

First observe that since $A$ is a separable $C^*$-algebra with  real rank zero so is $\bar{A}$, and since $\bar{\alpha}$ has the residual Rokhlin* property then the action of $\Z$ on $\widehat{(\bar{A})}$ is essentially free.

Now we claim that given any $I\in \mathcal{I}^\alpha(A)$ we have that $\bar{\alpha}_{\bar{A}/\bar{I}}$ contracts projections of $\bar{A}/\bar{I}$. Indeed, first observe that $\overline{\alpha_{A/I}}=\bar{\alpha}_{\bar{A}/\bar{I}}$. Then, it is enough to check that $\bar{\alpha}$ contracts projections of $\bar{A}$. Given any projection $p\in \bar{A}$ there exists a projection $q\in A$ and $n\in \N$ such that $\alpha_{1,n}(q)\sim p$, and hence $\bar{\alpha}(\alpha_{1,n}(q))=\alpha_{1,n}(\alpha(q))$. But since $\alpha$ contracts projections of $A$ we have that $\alpha^*([q])<[q]$, and hence 
$$\bar{\alpha}^*[p]=\bar{\alpha}^*[\alpha_{1,n}(q)]=[\alpha_{1,n}(\alpha(q))]<[\alpha_{1,n}(q)]=[p]\,,$$
as desired.

Therefore, by  \cite[Proposition 2.11]{PR} it is enough to check that every non-zero  hereditary sub-$C^*$-algebra in any quotient of $\bar{A}\times_{\bar{\alpha}}\Z$ contains an infinite projection. Since $\bar{A}$ separates ideals in  $\bar{A}\times_{\bar{\alpha}} \Z$, we have that every ideal of $\bar{A}\times_{\bar{\alpha}} \Z$ is of the form $\bar{I}\times_{\bar{\alpha}} \Z$, so $(\bar{A}\times_{\bar{\alpha}} \Z)/(\bar{I}\times_{\bar{\alpha}} \Z)\cong \overline{A/I}\times_{\overline{\alpha_{A/I}}} \Z$. Because of  all the assumptions pass to quotients, we can replace $\overline{A/I}$ by $\bar{A}$ and $\overline{\alpha_{A/I}}$ by $\bar{\alpha}$. 

Let $x$ be any positive element of $\bar{A}\times_{\bar{\alpha}} \Z$. By \cite[Lemma 3.2]{RS} there exists $a\in \bar{A}_+$ such that $a\lesssim x$. As $\bar{A}$ has real rank zero, there exists a projection $p\in \bar{A}$ such that $p\lesssim a$, so $p\lesssim x$. 
Then, there exists $n\in \N$ such that $\alpha^{*n}([p])<[p]\,$ in $V(\bar{A})$. Hence, there exists $t\in \bar{A}$ such that $t\bar{\alpha}^n(p)t^*+z=p$ for some idempotent $0\neq z\in \bar{A}$. If we set $r:=t\alpha^n(p)$ and we define $s:=rU^np$, where $U$ is the unitary in $M(\bar{A}\times_{\bar{\alpha}} \Z)$ that implements $\bar{\alpha}$, then
$$s^*s=(pU^{*n}r^*)(rU^np)=pU^{*n}r^*rU^np=pU^{*n}\bar{\alpha}^n(p)U^np=p$$
and
$$ss^*=(rU^np)(pU^{*n}r^*)=rU^npU^{*n}r=r\bar{\alpha}^n(p)r^*=t\bar{\alpha}^n(p)t^*.$$
Hence, $ss^*+z=t\bar{\alpha}^n(p)t^*+z=p$, whence $p$ is an infinite projection (since $z\neq 0$). 

Therefore, since $p\lesssim x$, by \cite[Proposition 2.6]{KR} there exist $\delta>0$ and $v\in \bar{A}\times_{\bar{\alpha}} \Z$ such that $v^*(x-\delta)_+v=p$. With $w=(x-\delta)_+^{1/2}v$ it follows that $w^*w=p$, whence $ww^*=(x-\delta)_+^{1/2}vv^*(x-\delta)_+^{1/2}\in \overline{x(\bar{A}\times_{\bar{\alpha}} \Z)x}$ is a projection equivalent to $p$. Hence, $ww^*$ is infinite because so is $p$. Thus, by \cite[Proposition 2.11]{PR}, we have that $\bar{A}\times_{\bar{\alpha}} \Z$ is a purely infinite $C^*$-algebra and hence $A\times_\alpha \N$ is too.
\end{proof}

\section{Examples}
In this section we will use the previous results to construct interesting examples of purely infinite crossed products $C^*$-algebras with different ideal structures. In particular we are going to build actions on strongly purely infinite $C^*$-algebras $A$, i.e. $A\cong A\otimes \mathcal{O}_\infty$. Since $\mathcal{O}_\infty$ is a strongly self-absorbing $C^*$-algebra with non-trivial projections, when $A$ has finitely many ideals we can apply  Corollary \ref{corol_rokhlin} to perturb any injective endomorphism $\beta\in \text{End }(A)$ to another endomorphism $\alpha$ satisfying the residual Rokhlin* property and with $K_*(\alpha)=K_*(\beta)$. Therefore $A\times_\alpha \N$ will be a purely infinite $C^*$-algebra with a lattice isomorphism  $\Phi:\mathcal{I}(A\times_\alpha\N)\longrightarrow \mathcal{I}^\beta(A)$. 

\begin{exem}\label{exem_2} Let $\mathcal{O}_n$ be the Cuntz algebra for $n=2,\ldots,\infty$;  by Kirchberg's work we have that $\mathcal{O}_n\otimes\mathcal{O}_\infty\cong \mathcal{O}_n$. Given $m\in\N$ with $m<n$, by Corollary  \ref{corol_rokhlin} there exists an injective endomorphism $\alpha_m:\mathcal{O}_n\longrightarrow   \mathcal{O}_n$ that satisfies the  Rokhlin* property and $K_*(\alpha_m)=m\cdot \text{Id}_{K_*(\mathcal{O}_n)}$. Therefore, since $\mathcal{O}_n$ is a simple purely infinite $C^*$-algebra in the UCT class, so is $\mathcal{O}_n\times_{\alpha_m}\N$. Thus, by Kirchberg-Philllips classification results, it is enough to compute its $K$-theory to determine in which isomorphism class lies. 

First recall that 
$$K_*(\mathcal{O}_n)\cong \left\lbrace \begin{array}{ll}  (\Z,0) & \text{if }n=\infty \\ (\Z/(n-1)\Z,0) & \text{otherwise}\end{array} \right. \,.$$ 
Observe that the endomorphism $\alpha_m$ defines a group endomorphism $K_0(\alpha_m):K_0(\mathcal{O}_n)\longrightarrow K_0(\mathcal{O}_n)$ given by $y\longmapsto my$ for every $y\in K_0(\mathcal{O}_n)$.  By continuity of $K$-theory we have that 
$$K_*(\overline{\mathcal{O}_n})\cong\varinjlim_{K_0(\alpha_m)}K_i(\mathcal{O}_n)\,,$$
so
$$\varinjlim_{K_0(\alpha_m)}K_0(\mathcal{O}_n)=\left\lbrace \begin{array}{ll}  \Z[1/m] & \text{if }n=\infty \\ \Z/k\Z & \text{if }n\neq\infty \text{ where }k= [gcd(n-1,m)|(n-1)]\end{array} \right. \,,$$
where given $a,b\in \N$ we define $[a|b]:=\text{max }\{c\in\N: c|b \text{ and }gcd(a,c)=1\}$. Then, $K_0(\overline{\alpha_m}):K_0(\overline{\mathcal{O}_n})\longrightarrow K_0(\overline{\mathcal{O}_n})$ is given by   $x\longmapsto mx$ for every $x\in K_0(\overline{\mathcal{O}_m})$, while $K_1(\overline{\mathcal{O}_m})=0$. 

Therefore, we can assume that $m\leq k$ and hence 
$$\text{Ker }(\text{Id}-K_0(\overline{\alpha_m}))=\left\lbrace \begin{array}{ll}  \Z & \text{if }n=\infty\text{ and }m=1 \\ 0 & \text{if }n=\infty\text{ and } m\neq 1\\ \Z/k\Z & \text{ if }n\neq \infty \text{ and }m=1 \\  \Z/l\Z & \text{ otherwise, where }l= k/gcd(k,m-1)\end{array} \right. \,,$$
and 
$$\text{Coker }(\text{Id}-K_0(\overline{\alpha_m}))=\left\lbrace \begin{array}{ll}  \Z & \text{if }n=\infty\text{ and }m=1 \\ \Z/(m-1)\Z & \text{if }n=\infty \text{ and }m\neq 1\\ \Z/k\Z & \text{if }n\neq \infty \text{ and }m=1 \\ \Z/l\Z & \text{otherwise,  where }l=k/gcd(k,m-1)\end{array} \right. \,.$$
So, using the Pimsner-Voiculescu six-term exact sequence,  we have 
$$\xymatrix{{K_0(\overline{\mathcal{O}_n})}\ar[r]^{\text{Id}-K_0(\overline{\alpha_m})} & {K_0(\overline{\mathcal{O}_n})}\ar[r]& K_0(\mathcal{O}_n\times_{\alpha_m}\N)\ar[d] \\ K_1(\mathcal{O}_n\times_{\alpha_m}\N) \ar[u] & {0}\ar[l]& {0}\ar[l]_{0 } }\,.$$
Thus, $K_0(\mathcal{O}_n\times_{\alpha_m}\N)\cong \text{Coker }(\text{Id}-K_0(\overline{\alpha_m}))$  and  $K_1(\mathcal{O}_n\times_{\alpha_m}\N)\cong \text{Ker }(\text{Id}-K_0(\overline{\alpha_m}))$. Hence, by Kirchberg-Phillips Classification Theorems  it follows that
$$\mathcal{O}_n\times_{\alpha_m}\N\cong \left\lbrace \begin{array}{ll}   B & \text{if }n=\infty \text{ and }m=1  \\ \mathcal{O}_{m} & \text{if }n=\infty \text{ and }m\neq 1 \\ \mathcal{O}_{l+1}\otimes \mathcal{O}_{l+1} & \text{otherwise}\end{array} \right. \,,$$
where $B$ is the unique Kirchberg algebra with $K_*(B)\cong (\Z,\Z)$.
\end{exem}

\begin{exem}
Let $E$ be the following graph
$$\xymatrix{  & \bullet_{v_1}\uloopr{}^{(8)}\ar[dl]\ar[dr]& \\  \bullet_{v_2}\uloopr{}^{(3)}\ar[dr]& & \bullet_{v_3}\uloopr{}^{(4)}\ar[dl]  \\& \bullet_{v_4}\uloopr{}^{(6)}& }\,.$$
Then, the graph $C^*$-algebra $C^*(E)$ is purely infinite with real rank zero \cite{HS}, and  $C^*(E)\cong C^*(E)\otimes \mathcal{O}_\infty$. So, taking $\beta=\text{Id}_{C^*(E)}$, by Theorem \ref{rokhlin_ideals} there exists an injective endomorphism $\alpha:C^*(E)\longrightarrow C^*(E)$ that  satisfies the residual Rokhlin* property, and $\mathcal{I}^\alpha(C^*(E))=\mathcal{I}(C^*(E))$. Moreover, $K_*(\alpha)=\text{Id}_{K_*(C^*(E))}$.

$E$ has the following hereditary and saturated subsets 
 $$\{\emptyset,\{v_4\},\{v_4,v_2\},\{v_4,v_3\},\{v_2,v_3,v_4\},\{v_1,v_2,v_3,v_4\}\}\,.$$
so by \cite{Bates} the Hasse diagram of its primitive ideal space $X_3$ is
$$\xymatrix{  & {1} & \\  {2}\ar[ur]& & {3}\ar[ul]  \\& {4}\ar[ur]\ar[ul] & }$$  
Given two ideals  $I,J\in \mathcal{I}(C^*(E))$ with $I\subseteq J$, using \cite{Bates2} we can easily compute $K_*(J/I)$. For example:
$$K_*(I_{\{4\}})\cong(\Z/5\Z,0)\,,\qquad K_*(I_{\{3,4\}})\cong(\Z/5\Z\oplus \Z/3\Z ,0)\,,$$
$$K_*(I_{\{2,4\}})\cong(\Z/5\Z\oplus \Z/2\Z,0)\,,\qquad K_*(I_{\{2,3,4\}})\cong(\Z/5\Z\oplus \Z/3\Z\oplus \Z/2\Z ,0)\,,$$
$$K_*(I_{\{1,2,3,4\}})\cong(\Z/7\Z\oplus\Z/5\Z\oplus \Z/3\Z\oplus \Z/2\Z ,0)\,,$$ 
where $I_X:=I_{Z}/I_Y$ and $Z,Y\subseteq E^0$ are hereditary and saturated subsets with $Y\subseteq Z$ and $X=Z\setminus Y$. 

Then $C^*(E)\times_\alpha \N$ is purely infinite  with primitive ideal space $X_3$. Since $C^*(E)$ separates the ideals of $C^*(E)\times_\alpha\N$, we have that all the subquotients of $C^*(E)\times_\alpha\N$  are of the form $\langle J\rangle / \langle I\rangle$ for $I,J\in \mathcal{I}(C^*(E))$ with $I\subseteq J$, and $\langle J\rangle / \langle I\rangle$ is strongly Morita equivalent to $(\bar{J}/\bar{I})\times_{\bar{\alpha}}\Z$. Therefore we can use the Pimsner-Voiculescu  six-term exact sequence for $K$-Theory to deduce that $K_*(\langle J\rangle / \langle I\rangle)\cong (K_0(J/I),K_0(J/I))$.

Finally observe that, given any ideals $I, J\in \mathcal{I}(C^*(E))$ with $I\subsetneq J$, does not exist any non-zero map $K_0(\langle J\rangle / \langle I\rangle)\longrightarrow K_1(\langle J\rangle / \langle I\rangle)$, thus $C^*(E)\times_\alpha \N$ is $K_0$-liftable. Hence, by results of Pasnicu and R{\o}rdam \cite[Theorem 4.2]{PR}, it must have real rank zero.
\end{exem}

\section*{Acknowledgments}

Parts of this work was done during visits of the first author to the Departamento de Matem\'aticas de la Universidad de C\'adiz (Spain) and of the second author to the Institutt for Matematiske Fag, Norges Teknisk-Naturvitenskapelige Universitet (Trondheim, Norway). The authors thank both host centers for their warm hospitality.


\begin{thebibliography}{99}

\bibitem{Adji} {\sc S. Adji}, Semigroup Crossed Products and the Structure of Toeplitz Algebras.  J. Operator Theory \textbf{44}  (2000), 139--150.

\bibitem{Bates} {\sc T. Bates, D. Pask, I. Raeburn and W. Szyma\'{n}ski}. The $C^*$-algebras of row-finite graphs. New York J. Math. \textbf{6} (2000), 307--324.

\bibitem{Bates2} {\sc T. Bates, J.H. Hong, I. Raeburn and W. Szyma\'{n}ski}. The ideal structure of the $C^*$-algebras of infinite graphs. Illinois J. Math. \textbf{46} (2002), no. 4, 1159--1176. 

\bibitem{B-O} \textsc{N.P. Brown and N. Ozawa}. ``$C^*$-algebras and finite-dimensional approximations''.
Graduate Studies in Mathematics, 88. American Mathematical Society, Providence, RI, 2008.

\bibitem{Cohn} \textsc{P.M. Cohn}. ``Algebra''. Vol. 2. Second edition. John Wiley \& Sons, Ltd., Chichester, 1989.

\bibitem{Cu} {\sc J. Cuntz}. Simple C -algebras generated by isometries. Comm. Math. Phys. \textbf{57} (1977), 173--185.

\bibitem{CK} \textsc{J. Cuntz and W. Krieger}. A class of C*-algebras and topological Markov chains. {Inventiones
Math.} \textbf{56} (1980), 251--268.

\bibitem{DR} {\sc K. Dykema and M. R{\o}rdam}. Purely infinite simple C*-algebras arising from free products. Can. J. Math. \textbf{50}, No. 2 (1998), 323--341. 

\bibitem{El} {\sc G. A. Elliott}. Some simple $C^*$-algebras constructed as crossed products with discrete outer automorphism groups. Publ. Res. Inst. Math. Sci. \textbf{16} (1980), no. 1, 299--311.

\bibitem{HS} {\sc J.H. Hong and W. Szyma\'{n}ski}. Purely Infinite Cuntz-Krieger Algebras of Directed Graphs. Bull. London Math. Soc. \textbf{35} (2003) 689--696. 

\bibitem{Kie} {\sc E. Kirchberg}. Das nicht-kommutative Michael-Auswahlprinzip und die Klassifikation nicht-einfacher Algebren. $C^*$-Algebras (Munster, 1999), Springer, Berlin, 2000, pp. 92--141 (German). 

\bibitem{KR} {\sc E. Kirchberg and M. R{\o}rdam}. Non-simple purely infinite $C^*$-algebras. Amer. J. Math. \textbf{122} (2000), no. 3, 637--666.

\bibitem{Laca} {\sc M. Laca}. From Endomorphisms to Automorphisms and Back: Dilations and Full Corners. {J. London Math. Soc.} (2) \textbf{61} (2000), no. 3, 893--904.

\bibitem{OP3} {\sc D. Olesen and G.K. Pedersen}. Applications of the Connes Spectrum to $C^*$-dynamical systems, III.  J. Funct. Anal. \textbf{45} (1982), no. 3, 357--390.

\bibitem{Pa} {\sc W. L. Paschke}. The crossed product by an endomorphism. Proc. Amer. Math. Soc. \textbf{80} (1980), 113--118.

\bibitem{PR} {\sc C. Pasnicu and M. R{\o}rdam}. Purely infinite $C^*$-algebras of real rank zero. J. Reine Angew. Math. \textbf{613} (2007), 51--73.

\bibitem{Phi} {\sc N. C. Phillips}. A Classification Theorem for Nuclear Purely infinite Simple $C^*$-Algebras. Documenta Math. (2000), no. 5, 49--114.

\bibitem{Ror} {\sc M. R{\o}rdam}. Classification of certain infinite simple $C^*$-algebras. J. Funct. Anal. \textbf{131}, (1995), 415--458. 

\bibitem{Re} {\sc J. Renault}. ``A groupoid approach to $C^*$-algebras''. Lecture Notes in Mathematics, vol.793, Springer, Berlin, 1980.

\bibitem{RS} {\sc M. R{\o}rdam and A. Sierakowski}. Purely Infinite $C^*$-Algebras Arising from Crossed Products. Ergod. Theory \& Dynam. Sys. \textbf{32} (2012), 273--293. 

\bibitem{Sie} {\sc A. Sierakowski}. The ideal structure of reduced crossed products. M\"unster J. Math. \textbf{3} (2010), 237--261.

\bibitem{Stacey} {\sc P.J. Stacey}. Crossed products of $C^\ast$-algebras by $\ast$-endomorphisms. J. Austral. Math. Soc. (Seres A) \textbf{54} (1993), no. 2, 204--212.

\bibitem{TW} {\sc A. Toms and W. Winter}. Strongly Self-Absorbing $C^*$-algebras. Trans. Amer. Math. Soc. \textbf{359} (2007), no.8, 3999--4029.
\end{thebibliography}
\end{document}